\providecommand{\MR}[1]{}
\DeclareMathOperator{\LE}{LE}
\DeclareMathOperator{\C}{Cap}
\DeclareMathOperator{\Ca}{Cap}
\DeclareMathOperator{\SLE}{SLE}
\newcommand{\R}{\mathbb{R}}
\newcommand{\Z}{\mathbb{Z}}
\newcommand{\N}{\mathbb{N}}
\newcommand{\cF}{\mathcal{F}}
\renewcommand{\P}{\mathbb{P}}
\newcommand{\E}{\mathbb{E}}
\newcommand{\1}{\mathbf{1}}
\newcommand{\limn}{\lim_{n\rightarrow\infty}}
\newcommand{\limm}{\lim_{m\rightarrow\infty}}
\renewcommand{\l}{\ell}
\newcommand{\heta}{\hat\eta}
\newcommand{\Xh}{\hat X}
\newcommand{\Xhi}{\Xh_\infty}
\newcommand{\Xhip}{\Xhi^+}
\newcommand{\Cz}{\C_{\Z^3}}
\newcommand{\Cr}{\C_{\R^3}}
\newtheorem{theorem}{Theorem}
\newtheorem{lemma}[theorem]{Lemma}
\newtheorem{proposition}[theorem]{Proposition}
\theoremstyle{definition}
\newtheorem{definition}[theorem]{Definition}
\numberwithin{equation}{section}
\numberwithin{theorem}{section}
\begin{document}

\title{Capacity of loop-erased random walk}
\date{\vspace{-5ex}}
\author{Maarten Markering}
\affil{University of Cambridge}
\maketitle
\let\thefootnote\relax\footnotetext{Date: May 11, 2026}
\let\thefootnote\relax\footnotetext{Email: mjrm2@cam.ac.uk}
\let\thefootnote\relax\footnotetext{2020 Mathematics Subject Classification: 60F15, 60G50, 37A25}
\let\thefootnote\relax\footnotetext{Key words: loop-erased random walk, capacity, law of large numbers, ergodicity, scaling limit}
\newcommand{\thefootnote}{\arabic{footnote}}

\begin{abstract}
We study the capacity of loop-erased random walk (LERW) on $\mathbb{Z}^d$. For $d\geq4$, we prove a strong law of large numbers and give explicit expressions for the limit in terms of the non-intersection probabilities of a simple random walk and a two-sided LERW. Along the way, we show that four-dimensional LERW is ergodic. For $d=3$, we show that the scaling limit of the capacity of LERW is random. We show that the capacity of the first $n$ steps of LERW is of order $n^{1/\beta}$, with $\beta$ the growth exponent of three-dimensional LERW. We express the scaling limit of the capacity of LERW in terms of the capacity of Kozma's scaling limit of LERW. As a corollary, we obtain the scaling limit of the LERW in three dimensions when parametrized by its capacity.
\end{abstract}

\section{Introduction}\label{sec:intro}
We give an overview of the model, history, and results in Section~\ref{sec:background}. We state the laws of large numbers for dimensions $d\geq5$, $d=4$ and $d=3$ in Sections \ref{sec:d5res}, \ref{sec:d4res} and~\ref{sec:d3res} respectively. In Section~\ref{sec:sketch-lerw}, we give an overview of the proofs. Finally, in Section \ref{sec:outline-lerw}, we give an outline of the rest of the paper.

\subsection{Background}\label{sec:background}
In this paper, we study the \emph{capacity} of the \emph{loop-erased random walk} (LERW) on $\Z^d$ for $d\geq3$. LERW was introduced by Lawler in \cite{lawler80selfavoiding}. It has since become a central object in modern probability theory. This is partly due to its many connections to other models, chief among them being the uniform spanning tree. In that context, the \emph{capacity} of LERW has become an important object of study. The capacity of a set $A\subset\Z^d$ can be considered as a rescaled hitting probability of $A$ by a simple random walk (SRW) started far away. Through Wilson's algorithm, the capacity of a loop-erased random walk is related to the length and density of branches in the uniform spanning tree/forest. Using Hutchcroft's interlacement Aldous-Broder algorithm developed in \cite{hutchcroft2020universality}, the capacity of loop-erased random walk played an important role in determining the scaling of several exponents for the uniform spanning tree on $\Z^d$ for $d\geq5$ by Hutchcroft \cite{hutchcroft2020universality} and $d=4$ by Hutchcroft and Sousi \cite{hutchcroft2020logarithmic} and Hutchcroft and Halberstam \cite{halberstam22logarithmic}. In particular, a weak law of large numbers for the capacity of LERW was established in \cite{hutchcroft2020logarithmic}. A finite analogue of the capacity of LERW was also instrumental in establishing the scaling limit of the uniform spanning tree on the torus in dimensions $d\geq4$ \cite{archer24ghp, archer24dense, michaeli2021diameter, peres04scaling, schweinsberg2009loop}. In this paper, we prove several laws of large numbers for the capacity of LERW in dimensions $d\geq3$.

We now give a formal definition of the LERW and capacity. Let $\Omega$ be the set of finite, one-sided infinite and two-sided infinite nearest-neighbour paths in $\Z^d$. Let $\cF$ be the sigma-algebra on $\Omega$ generated by events depending only on a finite number of indices (which we call cylinder events). Throughout this paper, all events and random variables will be measurable with respect to $\cF$. Lastly, we define the left-shift operator $T\colon\Omega\to\Omega$, given by $(T\omega)(k)=\omega(k+1)-\omega(1)$.
\begin{definition}[Loop-erased random walk]\label{def:lerw}
Let $\omega\in\Omega$ be a one-sided nearest-neighbour, transient path in $\Z^d$. The loop-erasure $\LE(\omega)$ of $\omega$ is the path obtained by erasing loops from $\omega$ chronologically. To be precise, $\LE(\omega)_i=\omega_{\l_i}$, where the times $(\l_i)_{i\geq0}$ are defined inductively as $\l_0=0$ and $\l_{i+1}=1+\max\{k\colon\omega_k=\omega_{\l_i}\}$. When $S$ is a simple random walk on $\Z^d$ for $d\geq3$, we write
\begin{equation}\label{eq:def-lerw}
	\eta:=\LE(S[0,\infty)).
\end{equation}
This is well-defined, since simple random walk is almost surely transient for $d\geq3$.
\end{definition}
\begin{definition}[Capacity]\label{def:capacity}
Let $W$ be a simple random walk on $\Z^d$ for $d\geq 3$ and denote by $\P_y$ the law of $W$ with $W(0)=y\in\Z^d$. For a set $A\subset\Z^d$, the capacity of $A$ is defined as
\begin{equation}
    \Ca(A):=\lim_{\|y\|\rightarrow\infty}\frac{\P_y(W[0,\infty)\cap A\neq\varnothing)}{G(0,y)}=\sum_{a\in A}\P_a(W[1,\infty)\cap A=\varnothing),
\end{equation}
where $G$ is the SRW Green's function on $\Z^d$ and $\|y\|$ denotes the Euclidean norm of $y$.
\end{definition}
We refer to \cite{lawler2010random} for more background on the capacity, including a proof that the two expressions above are equal. Throughout this paper, we let $W,S$ denote simple random walks and denote their laws by $\P^W$ and $\P^S$. We drop the superscripts if the law is clear from the context. As in the previous two definitions, we will use $W$ in the context of a random walk hitting another set and we will use~$S$ in the context of a random walk being loop-erased.

Although motivated by applications to other models, we study the capacity of LERW in its own right in this paper. We establish a strong law of large numbers (SLLN) for $\Ca(\eta[0,n])$ in dimensions $d\geq4$. In three dimensions, we show that $\Ca(\eta[0,n])$ has a non-deterministic scaling limit. These complement earlier results on the capacity of SRW, which has a rich history. Already in 1968, Jain and Orey established a SLLN for the capacity of the range of SRW in dimensions $d\geq5$ \cite{jain68range}. This was then extended to a central limit theorem for $d\geq6$ by Asselah, Schapira and Sousi in \cite{asselah18capacity}. The same authors also proved a SLLN and central limit theorem for $d=4$ in \cite{asselahcapacity19}. Furthermore, in \cite{chang17two}, Chang showed that the scaling limit in $d=3$ is random.

We also give explicit expressions for the limiting values. In dimensions $d\geq5$, we show that the SLLN limit equals the non-intersection probability of a SRW and a \emph{two-sided loop-erased random walk}. Two-sided LERW for $d\geq5$ was first constructed by Lawler in \cite{lawler80selfavoiding}. Much later, Lawler, Sun and Wu constructed two-sided LERW for $d=4$ in \cite{lawler19fourdimensional}. We show that (two-sided) LERW is ergodic in four dimensions. We then use an alternative formula for the capacity to express the limit in terms of non-intersection probabilities of a SRW and two-sided LERW. Both the ergodicity of LERW for $d=4$ as well as the alternative capacity formula are of independent interest.

In three dimensions, we do not consider the two-sided LERW. Rather, we express the scaling limit of $\Ca(\eta[0,n])$ as a function of the scaling limit of LERW, which was first constructed by Kozma in \cite{kozma2007scaling}. Furthermore, we show that the correct scaling for the capacity is $n^{1/\beta}$, with $\beta$ being the \emph{growth exponent} of three-dimensional LERW (see Section \ref{sec:d3res} for a definition of the growth exponent). We then show that the three-dimensional LERW parametrized by its capacity has a scaling limit, which is Kozma's process parametrized by its capacity. This complements the work in two dimensions, where the original proofs that the scaling limit of LERW converges to the continuous process $\SLE_2$ were done under the capacity parametrization, see \cite{lawler2011conformal}. It was only shown much more recently by Lawler and Viklund in \cite{lawler2021convergence} that two-dimensional LERW also converges in the natural parametrization.

\subsection{Dimensions \texorpdfstring{$d\geq5$}{d>=5}}\label{sec:d5res}
Before stating the SLLN in high dimensions, we first need to introduce the \emph{two-sided} loop-erased random walk, which we denote by $\heta$. The random variable $\heta\colon\Z\to\Z^d$ takes values in the space of two-sided infinite simple paths. Intuitively speaking, $\heta$ can be viewed as an infinite loop-erased random walk `seen from the middle', i.e., $\heta$ is the weak limit of $T^k\eta$ as $k\rightarrow\infty$. The two-sided LERW was first introduced by Lawler in \cite[Section 5]{lawler80selfavoiding} and is defined as follows.
\begin{definition}\label{def:twosided-lerw}
Let $d\geq5$. Let $S_1,S_2$ be independent simple random walks on $\Z^d$ started from~0, conditioned such that $\LE(S_1)[1,\infty)\cap S_2[1,\infty)=\varnothing$. The two-sided loop-erased random walk $\heta$ is defined as the union of $\LE(S_1)$ and $\LE(S_2)$, i.e., for $n\geq0$, $\heta(n)=\LE(S_1)(n)$ and $\heta(-n)=\LE(S_2)(n)$.
\end{definition}
We are now ready to state the strong law of large numbers for the capacity of loop-erased random walk in dimensions $d\geq5$.
\begin{theorem}\label{th:highdim}
Let $d\geq5$. Then almost surely,
\begin{equation}
    \limn\frac{\Ca(\eta[0,n])}{n}=\P(W[1,\infty)\cap\heta(-\infty,\infty)=\varnothing\mid W(0)=\heta(0)=0).
\end{equation}
\end{theorem}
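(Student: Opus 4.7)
My plan is to express $\C(\eta[0,n])/n$ as an ergodic average along the shifts of the LERW and to identify its limit as the expectation of a suitable function of the two-sided LERW $\heta$. The starting point is the standard sum formula
\begin{equation*}
\C(\eta[0,n]) = \sum_{k=0}^n \P^W_{\eta(k)}(W[1,\infty) \cap \eta[0,n] = \varnothing).
\end{equation*}
The intuition is that, for $k$ with both $k$ and $n-k$ large, the $k$-th summand only feels $\eta$ in a window around $\eta(k)$, and this window in law resembles the two-sided LERW $\heta$ centered at the origin. This approach is clean in $d\geq 5$ because SRW–LERW non-intersection has positive probability and Green's function decays fast, $G(0,x)\sim|x|^{2-d}$.

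I would then split the proof into three steps. First, for each $k$ construct a two-sided path $\xi^{(k)}$ centered at $\eta(k)$, whose forward half is $T^k\eta$ and whose backward half is obtained by reversing either the prefix $\eta[0,k]$ or an auxiliary independent loop-erasure; Lawler \cite{lawler80selfavoiding} provides a coupling under which $\xi^{(k)}$ agrees with $\heta$ on any prescribed window with probability tending to $1$ in $k$. Second, writing $F(\xi) := \P^W_0(W[1,\infty) \cap \xi(\Z) = \varnothing)$ for a two-sided path $\xi$ with $\xi(0)=0$, establish the almost-sure $\ell^1$ approximation
\begin{equation*}
\sum_{k=0}^n \bigl| \P^W_{\eta(k)}(W[1,\infty) \cap \eta[0,n] = \varnothing) - F(\xi^{(k)}) \bigr| = o(n),
\end{equation*}
whose errors split into boundary effects near $k=0$ and $k=n$, the mismatch between the synthetic and actual past, and tail contributions from $\eta[n,\infty)$, each controlled by Green's function decay and non-intersection estimates. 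Third, establish Birkhoff's ergodic theorem for $\heta$: stationarity under the shift $T$ is built into the definition of $\heta$ as a limit, and ergodicity should follow from triviality of the tail sigma-algebra via the spatial decorrelation of LERW in $d\geq 5$. This gives $\frac{1}{n}\sum_{k=0}^n F(T^k\heta)\to\E[F(\heta)]$ almost surely, and the coupling transfers the same convergence to the $\xi^{(k)}$'s and, through step two, to $\C(\eta[0,n])/n$ itself. Unravelling the definition of $F$ matches the limit with the probability stated in the theorem.

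The main obstacle I anticipate is the almost-sure $\ell^1$ bound in step two: individual summands need not be uniformly small, only their average is, so the task is to control fluctuations rather than typical terms. The delicate contribution is the probability that $W$ starting from $\eta(k)$ exits a local box and then returns to hit a distant part of $\eta[0,n]$; bounding this requires quantitative SRW–LERW non-intersection estimates that are robust to the geometry of $\eta$. A close second obstacle is ergodicity of $\heta$: since LERW is not Markovian, standard mixing criteria do not apply directly, and one must use the explicit two-sided construction to rule out nontrivial shift-invariant events.
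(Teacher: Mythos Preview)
Your overall strategy—write the capacity as an ergodic-like average and identify the limit via $\heta$—matches the paper's intuition, but there is a genuine gap in your step three. Birkhoff's theorem gives $\frac{1}{n}\sum_{k=0}^n F(T^k\heta)\to\E[F(\heta)]$ almost surely \emph{under the law of $\heta$}, and that is a statement on $\heta$-space. Your $\xi^{(k)}$'s, however, are functions of $\eta$ (plus possibly auxiliary randomness), and $\eta$ is not stationary under $T$, so Birkhoff does not apply to $\eta$ directly. The coupling you invoke is only a marginal, distributional coupling between $\xi^{(k)}$ and $\heta$ for each fixed $k$; it says nothing about the joint law of $(\xi^{(k)})_{k\geq0}$ and therefore cannot transfer an almost-sure ergodic average from $\heta$-space to $\eta$-space. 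The obstacle you flagged (the $\ell^1$ bound in step two) is manageable in $d\geq5$; the real missing idea is this transfer.

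The paper closes the gap by a short reduction you did not use: the law of $\heta$ is absolutely continuous with respect to that of $\eta$, so any almost-sure event for $\heta$ has positive $\eta$-probability; since the event ``$\frac{1}{n}\C(\cdot[0,n])$ converges'' is $T$-invariant and $\eta$ has trivial $T$-invariant sigma-algebra, that positive probability must be $1$. This reduces everything to the SLLN for $\C(\heta[0,n])$, where $\heta$ \emph{is} stationary and Birkhoff applies cleanly. After this reduction, your step two becomes unnecessary: the lower bound follows from the trivial monotonicity $\C(\heta[0,n])\geq\sum_{k=0}^n\P^W_0(W[1,\infty)\cap(T^k\heta)(-\infty,\infty)=\varnothing\mid\heta)$ and Birkhoff; the upper bound from subadditivity $\C(\heta[0,n])\leq\sum_{j}\C((T^{jm}\heta)[0,m])$, Birkhoff for $T^m$, and a direct computation of $\lim_m\frac{1}{m}\E[\C(\heta[0,m])]$. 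If you graft the absolute-continuity reduction onto your plan it can be completed, but at that point the paper's monotonicity/subadditivity sandwich is shorter and avoids the quantitative non-intersection estimates altogether.
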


\subsection{Dimension \texorpdfstring{$d=4$}{d=4}}\label{sec:d4res}
In $\Z^4$, we have $\P(W[1,\infty)\cap\LE(S[0,\infty))=\varnothing)=0$. So we cannot define two-sided LERW as above. Note that in high dimensions, $\heta[0,\infty)$ has the law of a one-sided LERW weighted by the probability that a SRW started from the same point does not intersect it. Although this probability is 0 for an infinite SRW, we can still consider the $n$-step escape probabilities and show that they converge when properly rescaled. Let
\begin{equation}\label{eq:def-Xn-lerw}
	X_n:=(\log n)^{\frac{1}{3}}\P^W_0(W[1,n]\cap\eta[0,\infty)=\varnothing\mid\eta).
\end{equation}
\begin{theorem}[\cite{lawler19fourdimensional}]\label{th:rescaledescapeprob}
There exists a nontrivial random variable $X_\infty$ such that
\begin{equation}\label{eq:def-Xinfty-lerw}
	X_\infty=\limn X_n
\end{equation}
almost surely and in $L^p$ for all $p>0$.
\end{theorem}

We can now define two-sided LERW in the same way as in high dimensions, but using the random variable $X_\infty$ instead of $\P^W(W[1,\infty)\cap\eta[0,\infty)=\varnothing\mid\eta)$.
\begin{definition}
We define the two-sided loop-erased random walk $\heta$ as the random variable on two-sided infinite paths in $\Omega$ such that $\heta[0,\infty)$ is absolutely continuous with respect to $\eta$ with Radon-Nikodym derivative given by $\frac{X_\infty}{\E[X_\infty]}$. It was shown in \cite{lawler19fourdimensional} that $(T\heta)[0,\infty)\stackrel{d}{=}\heta[0,\infty)$. Thus, we can extend $\heta$ to the space of two-sided infinite paths by defining $\heta[-a,b]\stackrel{d}{=}\heta[0,a+b]-\heta(a)$.
\end{definition}

In order to formulate the SLLN for $d=4$, we need the following extension of Theorem \ref{th:rescaledescapeprob}.
\begin{theorem}\label{th:Xhat}
Let $d=4$. Then there exist random variables $\Xhi$ and $\Xhip$ depending on $\heta$ such that
\begin{equation}\label{eq:def-Xhat}
    \begin{split}
        \hat{X}^+_\infty&=\limn \hat{X}^+_n:=\limn(\log n)^{1/3}\P^W_0(W[1,\infty)\cap\hat{\eta}[1,n]=\varnothing\mid \heta),\\
        \hat{X}_\infty&=\limn \hat{X}_n:=\limn(\log n)^{1/3}\P^W_0(W[1,\infty)\cap\hat{\eta}[0,n]=\varnothing\mid \heta)
    \end{split}
\end{equation}
almost surely and in $L^p$ for every $p<3$.
\end{theorem}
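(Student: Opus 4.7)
The plan is to deduce both convergences from Theorem~\ref{th:rescaledescapeprob} by exploiting the absolute continuity of $\heta[0,\infty)$ with respect to $\eta$. By the definition of $\heta$, for any bounded measurable functional $F$ one has $\E[F(\heta[0,\infty))] = \E[F(\eta)X_\infty]/\E[X_\infty]$, and Theorem~\ref{th:rescaledescapeprob} gives $X_\infty\in L^q$ for every $q>0$. Introducing the one-sided analogues
\begin{align*}
Y_n^+ &:= (\log n)^{1/3}\P^W_0(W[1,\infty)\cap\eta[1,n]=\varnothing\mid\eta),\\
Y_n &:= (\log n)^{1/3}\P^W_0(W[1,\infty)\cap\eta[0,n]=\varnothing\mid\eta),
\end{align*}
the theorem will follow once I show that $Y_n,Y_n^+$ converge $\P$-almost surely to finite limits $Y_\infty,Y_\infty^+$ and are uniformly bounded in $L^r$ for some $r>3$. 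Indeed, since $X_\infty>0$ a.s., a.s.\ convergence transfers under the change of measure and determines $\Xhi,\Xhip$, while $L^p$ convergence for $p<3$ follows by Hölder's inequality together with the arbitrarily high integrability of $X_\infty$.

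For the one-sided convergence, I would compare $Y_n$ directly to $X_n$ from Theorem~\ref{th:rescaledescapeprob}. Factoring on the time cut at $n$ gives
\begin{align*}
Y_n &= (\log n)^{1/3}\P_0^W(W[1,n]\cap\eta[0,n]=\varnothing\mid\eta)\cdot\P_0^W(W[n,\infty)\cap\eta[0,n]=\varnothing\mid W[1,n]\cap\eta[0,n]=\varnothing,\eta),\\
X_n &= (\log n)^{1/3}\P_0^W(W[1,n]\cap\eta[0,n]=\varnothing\mid\eta)\cdot\P_0^W(W[1,n]\cap\eta[n,\infty)=\varnothing\mid W[1,n]\cap\eta[0,n]=\varnothing,\eta),
\end{align*}
so both quantities share the common prefactor $(\log n)^{1/3}\P_0^W(W[1,n]\cap\eta[0,n]=\varnothing\mid\eta)$. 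In $d=4$ the cluster $\eta[0,n]$ has diameter of order $\sqrt n$ and capacity of order $n/\log n$, and conditional on the non-intersection event both $W(n)$ and $\eta(n)$ sit at diffusive distance $\sqrt n$ from the origin. Standard separation and hitting estimates should then give that each correction factor equals $1-O((\log n)^{-1})$ on a $\P$-event of overwhelming probability, which forces $|Y_n-X_n|\to 0$ strongly enough to conclude $Y_n\to X_\infty$ a.s.\ and in $L^r$. The analysis of $Y_n^+$ is analogous: dropping the origin only affects whether $W$ returns to $0$, an event essentially independent of the non-intersection event, so $Y_n^+$ converges to a (possibly distinct) limit $Y_\infty^+$.

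The main obstacle is to make the above comparison quantitative enough to upgrade a.s.\ convergence to $L^r$ convergence for some $r>3$, rather than to obtain a.s.\ convergence alone. This requires the sort of sharp four-dimensional non-intersection and separation estimates developed in \cite{lawler19fourdimensional}, applied to the interaction between a SRW and an infinite LERW. The restriction $p<3$ in the conclusion is expected to be inherited from the strongest moment bound available in such estimates in the critical dimension $d=4$, and not from the change of measure (whose Radon--Nikodym density is in every $L^q$). Once the one-sided limits are in hand, the extension of $\Xhi$ and $\Xhip$ from functionals of $\heta[0,\infty)$ to functionals of the full two-sided path is immediate from the shift-invariance of $\heta$.
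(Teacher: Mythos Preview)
Your overall architecture matches the paper's: first prove the analogous convergence for the one-sided quantities $Y_n=\tilde X_n$, $Y_n^+=\tilde X_n^+$ under the law of $\eta$, then transfer to $\heta$ via absolute continuity and H\"older. The a.s.\ part of your sketch is essentially what the paper does, though the paper makes the comparison $Y_n\leftrightarrow X_n$ precise not through your conditional factorisation but through two one-sided inequalities obtained by the union bound, controlling the cross terms $(\log n)^{1/3}\P(W[1,n]\cap S[n+1,\infty)\neq\varnothing\mid S)$ and $(\log n)^{1/3}\P(W[n+1,\infty)\cap\eta[0,n]\neq\varnothing\mid\eta)$ via known $d=4$ intersection estimates, then running Borel--Cantelli along the subsequence $a_n=e^{n^2}$ and upgrading to the full sequence by monotonicity.

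The one genuine misstep is your target for the moment bound. You set yourself the task of proving $Y_n$ is uniformly bounded (or convergent) in $L^r$ for some $r>3$, and you flag this as the main obstacle. It is an unnecessary obstacle. With only the crude bound $\|Z\|_p\le(\E Z)^{1/p}$ for $Z\in[0,1]$, the cross terms above have $L^p$ norm $\lesssim(\log n)^{1/3-1/p}$, which vanishes precisely for $p<3$; pushing past $3$ would require second-moment control of those conditional intersection probabilities, which the paper explicitly declines to pursue. The point is that $L^{p'}$ convergence of $Y_n$ for every $p'<3$ already suffices: for a given $p<3$, take $p'=(3+p)/2<3$ and apply H\"older with exponents $p'/p$ and its conjugate, using that the Radon--Nikodym derivative $f=X_\infty/\E[X_\infty]$ lies in every $L^q$, to get $\hat\E[|Y_n-Y_\infty|^p]\le\|f\|_{q}\,\|Y_n-Y_\infty\|_{p'}^{p}\to0$. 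This is exactly the paper's route. Your own closing remark that the restriction $p<3$ ``is expected to be inherited from the strongest moment bound available'' is correct and in fact undercuts your earlier plan: if only $p'<3$ is available for $Y_n$, you cannot reach $r>3$, but you also do not need to.
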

We prove a SLLN for $d=4$ in terms of $\hat{X}_\infty$ and $\hat{X}^+_\infty$.
\begin{theorem}\label{th:d4}
Let $d=4$. Then almost surely,
\begin{equation}
    \limn\frac{(\log n)^{2/3}\Ca(\eta[0,n])}{n}=\E[\Xh_\infty\Xh^+_\infty].
\end{equation}
\end{theorem}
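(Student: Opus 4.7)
The plan is to start from the summation formula
\[
\C(\eta[0,n])=\sum_{k=0}^n\P^W_{\eta(k)}\bigl(W[1,\infty)\cap\eta[0,n]=\varnothing\,\big|\,\eta\bigr)
\]
and to view the rescaled capacity $(\log n)^{2/3}\C(\eta[0,n])/n$ as an ergodic average of a functional of the shifted LERW $T^k\eta$. After translating $\eta(k)$ to the origin, the $k$-th summand becomes the escape probability of a SRW from the two-sided path $\eta[0,n]-\eta(k)$. For $k$ in the bulk $[n^\alpha,n-n^\alpha]$ I expect this probability to behave asymptotically like $(\log n)^{-2/3}\,\Xh_\infty(T^k\eta)\,\Xh^+_\infty(T^k\eta)$, the product form reflecting the splitting of $\eta$ at $\eta(k)$ into a forward half of length $n-k$ and a backward half of length $k$.

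The first main step is this two-sided factorisation: I would prove that as $m,n\to\infty$,
\[
(\log(m+n))^{2/3}\,\P^W_0\bigl(W[1,\infty)\cap\heta[-m,n]=\varnothing\,\big|\,\heta\bigr)\longrightarrow\Xh_\infty\Xh^+_\infty
\]
almost surely and in $L^p$ for $p<3/2$, the latter following from Cauchy--Schwarz and the $L^p$, $p<3$, bounds of the theorem preceding Theorem~\ref{th:d4}. The argument would decouple the events $\{W[1,\infty)\cap\heta[1,n]=\varnothing\}$ and $\{W[1,\infty)\cap\heta[-m,-1]=\varnothing\}$ via a separation/last-exit argument at an intermediate dyadic scale, in the spirit of the four-dimensional separation lemmas of Lawler--Sun--Wu \cite{lawler19fourdimensional}. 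This is the hard part: the forward and backward halves of $\heta$ are not independent under the law of $\heta$, so the product form is a genuine decoupling statement relying on the near-diffusivity of 4D SRW together with the fact that the two halves behave like almost-independent LERWs conditioned to avoid each other (essentially the defining property of $\heta$).

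The second step is to apply the ergodic theorem. Four-dimensional LERW is ergodic under $T$ (proved elsewhere in the paper), and this transfers through the $X_\infty$-biased definition of $\heta$ to ergodicity of $\heta$. Birkhoff's theorem applied to the $T$-stationary functional $\omega\mapsto\Xh_\infty(\omega)\Xh^+_\infty(\omega)$ then gives
\[
\frac{1}{n}\sum_{k=0}^n \Xh_\infty(T^k\heta)\Xh^+_\infty(T^k\heta)\longrightarrow \E[\Xh_\infty\Xh^+_\infty]\quad\text{a.s.}
\]
The corresponding statement with $\eta$ in place of $\heta$ follows from $L^p$ control of the rescaled escape probabilities combined with convergence in distribution of $T^k\eta$ to $\heta$.

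Finally, I would assemble the pieces by splitting the capacity sum into the bulk $k\in[n^\alpha,n-n^\alpha]$ and two boundary ranges. On the bulk, the asymptotic of Step~1 and the ergodic average of Step~2 combine to give the limit $\E[\Xh_\infty\Xh^+_\infty]$. Each boundary range contributes $O(n^\alpha)$ summands, each bounded in expectation by $O((\log n)^{-1/3})$ via the uniform escape-probability estimates underlying Theorem~\ref{th:rescaledescapeprob}, so their contribution to $(\log n)^{2/3}\C(\eta[0,n])/n$ is $O(n^{\alpha-1}(\log n)^{1/3})$, which tends to $0$ for $\alpha<1$. This would complete the proof, with the main technical burden concentrated in the two-sided factorisation of Step~1.
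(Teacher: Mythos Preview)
Your skeleton---reduce to $\heta$, invoke ergodicity, apply Birkhoff---matches the paper. The divergence is at your Step~1: you propose to prove the two-sided factorisation
\[
(\log(m+n))^{2/3}\,\P^W_0\bigl(W[1,\infty)\cap\heta[-m,n]=\varnothing\,\big|\,\heta\bigr)\longrightarrow \Xh_\infty\Xh^+_\infty,
\]
and you correctly flag this as the hard part. The paper sidesteps it entirely. Its key input is an \emph{exact} capacity identity (Lemma~\ref{lem:capacitydecomp}): for any enumeration $\{x_1,\dots,x_n\}$,
\[
\C(\{x_1,\dots,x_n\})=\sum_{k=1}^n\P_{x_k}\bigl(W[1,\infty)\cap\{x_1,\dots,x_k\}=\varnothing\bigr)\,\P_{x_k}\bigl(W[1,\infty)\cap\{x_1,\dots,x_{k-1}\}=\varnothing\bigr).
\]
Enumerating $\heta[0,n]$ from $\heta(n)$ down to $\heta(0)$, the $k$-th summand becomes the product of the escapes from $\heta[k,n]$ and $\heta[k+1,n]$, i.e.\ $(\log(n-k))^{-2/3}\hat X_{n-k}(T^k\heta)\hat X^+_{n-k}(T^k\heta)$. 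The product structure is algebraic, not asymptotic, and both factors look in the \emph{same} direction---which is exactly why the answer is $\E[\Xh_\infty\Xh^+_\infty]$ with $\Xh_\infty,\Xh^+_\infty$ both defined via $\heta[0,\infty)$.

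There is a real gap in your route beyond its difficulty. Your splitting of the two-sided escape at $\heta(0)$ would naturally produce a forward factor times a \emph{backward} factor, say $\Xh^+_\infty\cdot\Xh^{-}_\infty$ with $\Xh^{-}_\infty$ a functional of $\heta(-\infty,0]$. Even granting the separation lemma, $\Xh^{-}_\infty\Xh^+_\infty$ and $\Xh_\infty\Xh^+_\infty$ are different random variables (the latter two are both $\sigma(\heta[0,\infty))$-measurable and highly correlated; the former pair couples the two halves of $\heta$), and there is no evident reason their expectations coincide. So even a successful decoupling would not deliver the stated limit without a further identity---which is precisely what Lemma~\ref{lem:capacitydecomp} provides.

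The paper's upper bound also differs from your bulk-plus-boundary scheme: it uses subadditivity to cut $\heta[0,n]$ into blocks of length $m$, applies Birkhoff under $T^m$ to get $\limsup\le \tfrac{(\log m)^{2/3}}{m}\E[\C(\heta[0,m])]$, and then evaluates that expectation via the same identity and stationarity of $\heta$. This avoids needing almost-sure control of boundary sums.
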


\subsection{Dimension \texorpdfstring{$d=3$}{d=3}}\label{sec:d3res}
Lastly, we show a non-deterministic law of large numbers for $d=3$. We relate the rescaled capacity of LERW in $\Z^d$ to the capacity of its scaling limit in $\R^d$. Existence of the scaling limit of $\eta$ as the mesh size tends to 0 was first shown by Kozma in \cite{kozma2007scaling} along a dyadic subsequence. This was later strengthened to the following statement in \cite{hernandez2024sharp}, which is what we will need for the law of large numbers for the capacity. Let $\mathcal{C}$ be the collection of parametrized curves $\lambda\colon[0,\infty)\to\R^3$ with $\lambda(0)=0$ and $\lim_{t\rightarrow\infty}\lambda(t)=\infty$. We also consider the LERW $\eta$ to be an element of $\mathcal{C}$, with linear interpolation at non-integer times. Consider the metric $\chi$ on $\mathcal{C}$ given by 
\begin{equation}\label{eq:def-chi-lerw}
    \begin{split}
        \chi(\lambda_1,\lambda_2):=\sum_{k=1}^\infty2^{-k}\sup_{0\leq t\leq k}\min\{|\lambda_1(t)-\lambda_2(t)|,1\}
    \end{split}
\end{equation}
for $\lambda_1,\lambda_2\in\mathcal{C}$. Let 
\begin{equation}\label{eq:beta}
    \beta:=\limn\frac{\log n}{\log\E\|\eta(n)\|}\in\left(1,\frac{5}{3}\right]
\end{equation}
be the \emph{growth exponent} for LERW in $\Z^3$. A priori, it is not clear that $\beta$ is well-defined. In \cite{shiraishi2018growth}, Shiraishi proved existence of the limit. Furthermore, Hern\'andez-Torres, Li and Shiraishi proved convergence of LERW to Kozma's scaling limit for the full sequence $(n^{1/\beta}\eta(n\cdot))_{n\in\N}$ in \cite{hernandez2024sharp}.
\begin{theorem}[\cite{shiraishi2018growth}, \cite{hernandez2024sharp}]\label{th:scalinglim3d}
The limit in \eqref{eq:beta} exists. Furthermore, there exists a random infinite parametrized curve $\gamma\in\mathcal{C}$ such that $(n^{-1/\beta}\eta(n\cdot))_{n\in\N}$ converges weakly to $\gamma$ in~$(\mathcal{C},\chi)$ as $n\rightarrow\infty$.
\end{theorem}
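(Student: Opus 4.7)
For existence of $\beta$, I would work with a robust one-point function such as $\tau_r := \E[T_r]$, where $T_r$ is the first time $\eta$ exits the Euclidean ball $B_r$ of radius $r$; this is comparable up to constants to $\E\|\eta(n)\|$ after inverting the relation $\E\|\eta(\tau_r)\| \asymp r$. The goal is a quasi-multiplicativity estimate $\tau_{rs} \asymp \tau_r \tau_s$, uniform in $r, s \ge 1$. Two ingredients drive this. First, the domain Markov property of LERW says that, conditional on the initial segment of $\eta$ up to exiting $B_r$, the remainder is the loop-erasure of a SRW conditioned to avoid the reversal of that segment; second, a separation lemma in the spirit of Kesten shows that with positive probability uniform in scale, the exit point of $\eta$ from $B_r$ is at macroscopic distance from its past trace, making the conditioning in the Markov property harmless. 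Concatenating gives quasi-multiplicativity, and a standard real-variable argument for $\log \tau_{2^k}$ with bounded additive error then yields existence of $\beta := \lim_k \log 2^k / \log \tau_{2^k}$. Pulling back to $\E\|\eta(n)\|$ gives \eqref{eq:beta}, and the range $\beta \in (1, 5/3]$ comes from the diffusive lower bound and a comparison with the range of SRW.

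For the full-sequence scaling limit, I would begin with Kozma's theorem, which already provides weak convergence of $2^{-k/\beta}\eta(2^k\cdot)$ to a random curve $\gamma \in \mathcal{C}$ along the dyadic subsequence. To extend to all $n$, the plan is to show that for each $T > 0$, the rescaled curves $n^{-1/\beta}\eta(n\cdot)$ and $2^{-k/\beta}\eta(2^k\cdot)$ are uniformly close in $\chi$ on $[0,T]$ whenever $2^k \le n < 2^{k+1}$, with high probability. This requires two quantitative upgrades: a sharp form of quasi-multiplicativity showing $\tau_r = r^\beta L(r)$ for some slowly varying $L$ with controlled logarithmic oscillation across dyadic scales, so that the rescaling factors $n^{-1/\beta}$ and $2^{-k/\beta}$ differ by $1 + o(1)$ within a dyadic window; and a modulus-of-continuity estimate showing that at the spatial scale $n^{1/\beta}$, running the walk from time $n$ to $2n$ creates displacements of the same order but no larger. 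Combined with the tightness inherent in Kozma's construction, these force every subsequential limit of $n^{-1/\beta}\eta(n\cdot)$ to coincide with $\gamma$, yielding convergence along the full sequence in $(\mathcal{C},\chi)$.

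The main obstacle is the sharpness of the separation and two-point function estimates in three dimensions, where LERW has rich self- and mutual-intersection structure with SRW. Existence of $\beta$ is a fairly soft quasi-multiplicativity argument, but the full-sequence upgrade requires iterated separation estimates whose multiplicative errors are summable across dyadic scales. This is where the bulk of the technical work in \cite{shiraishi2018growth} and \cite{hernandez2024sharp} lies; everything else follows from the domain Markov property combined with standard tightness-and-identification arguments.
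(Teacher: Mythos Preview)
The paper does not prove Theorem~\ref{th:scalinglim3d}; it is quoted as a black-box input, with the attribution to \cite{shiraishi2018growth} and \cite{hernandez2024sharp} made explicit in the theorem header and in the surrounding text. There is therefore no proof in this paper to compare your proposal against.

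As a summary of the strategy in the cited references, your sketch is broadly reasonable: existence of $\beta$ via quasi-multiplicativity of exit times driven by the domain Markov property and a separation lemma is indeed Shiraishi's approach, and the full-sequence upgrade does rest on sharpening the one-point function so that rescaling factors within a dyadic window agree to $1+o(1)$. One inaccuracy worth flagging: Kozma's original theorem is about convergence of LERW in a domain as the mesh size goes to zero along dyadics, in the Hausdorff topology on traces, not about convergence of $2^{-k/\beta}\eta(2^k\cdot)$ in the natural parametrization. Getting from Kozma's result to a parametrized statement along dyadics already requires substantial additional input (the time-parametrization work of Shiraishi and Li--Shiraishi), and \cite{hernandez2024sharp} builds on that intermediate layer rather than directly on Kozma. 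Your Step~2 therefore understates what is needed before the ``compare $n$ with the nearest dyadic'' argument can even begin.
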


Our last main theorem expresses the scaling limit of $\Ca(\eta[0,n])$ in $\Z^3$ in terms of the capacity of $\gamma[0,1]$. First, we need to define the capacity of a subset of $\R^3$.
\begin{definition}\label{def:capacity-R3}
Let $A\subset\R^3$. Let $M$ be a standard Brownian motion and denote by $\P^M_y$ the law of $M$ with $M(0)=y$. The \emph{capacity} of $A$ is defined\footnote{As in the discrete case, there are several different equivalent definitions of capacity, but for our purposes we only need the one stated here. We refer to \cite{morters2010brownian} for more background.} as
\begin{equation}
    \Ca_{\R^3}(A):=\lim_{\|y\|\rightarrow\infty}\frac{\P_y^M(M[0,\infty)\cap A\neq\varnothing)}{G_{\R^3}(0,y)},
\end{equation}
with $G_{\R^3}$ the Green's function of Brownian motion on $\R^3$.
\end{definition}
From now on, when discussing three-dimensional capacity, we will use the subscripts $\Ca_{\Z^3}$ and $\Ca_{\R^3}$ to avoid confusion. In higher dimensions, we only consider discrete capacity and drop the subscript. We are now ready to state the law of large numbers for the capacity of three-dimensional LERW.
\begin{theorem}\label{th:d3}
Let $d=3$. Then
\begin{equation}
    \limn\frac{\Ca_{\Z^3}(\eta[0,n])}{3n^{1/\beta}}=\Cr(\gamma[0,1])
\end{equation}
in distribution.
\end{theorem}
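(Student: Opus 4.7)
The plan is to push the scaling limit of Theorem \ref{th:scalinglim3d} through the capacity functional. By Skorokhod's representation theorem we may realise the convergence $n^{-1/\beta}\eta(n\cdot)\to\gamma$ almost surely in $(\mathcal{C},\chi)$ on a common probability space, and it then suffices to prove that $\Cz(\eta[0,n])/(3n^{1/\beta})\to\Cr(\gamma[0,1])$ in probability along this coupling. The argument splits naturally into a scaling relation between discrete and continuum capacity and a continuity statement for $\Cr$ at $\gamma[0,1]$.

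For the scaling relation, set $A_n:=\eta[0,n]$ and $\tilde A_n:=n^{-1/\beta}A_n\subset\R^3$. Starting from
$$\Cz(A_n)=\lim_{\|y\|\to\infty}\frac{\P_y^W(W[0,\infty)\cap A_n\neq\varnothing)}{G(0,y)},$$
Donsker's invariance principle converts the SRW hitting probability into a Brownian hitting probability of $\tilde A_n$ for $y$ taken at scale $n^{1/\beta}$. Meanwhile the SRW Green's function on $\Z^3$ satisfies $G(0,x)\sim 3/(2\pi\|x\|)$ as $\|x\|\to\infty$, the factor $3$ arising from the variance $1/3$ per coordinate of the SRW step, so $G(0,n^{1/\beta}z)\sim 3n^{-1/\beta}G_{\R^3}(0,z)$. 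Combining these yields
$$\frac{\Cz(A_n)}{3n^{1/\beta}}=\Cr(\tilde A_n)+o(1),$$
where the error is controlled uniformly in the configuration of $A_n$ using standard Beurling-type estimates for SRW in $\Z^3$ and a routine diagonal extraction in $\|y\|\to\infty$ and $n\to\infty$.

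The second and more delicate step is to show $\Cr(\tilde A_n)\to\Cr(\gamma[0,1])$ under the $\chi$-convergence of $n^{-1/\beta}\eta(n\cdot)$ to $\gamma$. Since this convergence implies Hausdorff convergence of $\tilde A_n[0,1]$ to $\gamma[0,1]$, the upper bound $\limsup_n\Cr(\tilde A_n)\leq\Cr(\gamma[0,1])$ follows from monotonicity of $\Cr$ together with continuity of $\Cr$ under tubular shrinking of the continuum $\gamma[0,1]$. The lower bound is the delicate part: one must show that Brownian motion that avoids $\tilde A_n[0,1]$ also avoids $\gamma[0,1]$ asymptotically. For this I would invoke a quantitative, uniform-in-$n$ Beurling-type estimate for $n^{-1/\beta}\eta(n\cdot)$, controlling the probability that Brownian motion passes within distance $\varepsilon$ of a macroscopic piece of the curve without hitting it; such an estimate should be inherited from the corresponding escape-probability bounds for three-dimensional LERW and transported through the scaling limit using the sharp convergence of \cite{hernandez2024sharp}.

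The main obstacle is precisely this continuity of $\Cr$ at the random fractal $\gamma[0,1]$. Since $\gamma$ has dimension $\beta\in(1,5/3]$, strictly greater than $d-2=1$, its capacity is positive and Brownian motion hits it with positive probability; however, uniform quantitative control of near-misses of Brownian motion, of a type that does not degrade as $n\to\infty$, is required to close both semicontinuity bounds. The Beurling-type estimates of Shiraishi and of Hern\'andez-Torres--Li--Shiraishi, combined with regularity properties of Kozma's scaling limit, should provide the ingredients needed to handle this step and hence complete the proof.
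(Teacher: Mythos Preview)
Your overall strategy---realise the scaling limit almost surely via Skorokhod and then pass through a continuity argument for capacity---is the same as the paper's, and your second step correctly locates the analytic difficulty in a Beurling-type hitting estimate for LERW. The gap is in your first step.

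The claim
\[
\frac{\Cz(A_n)}{3n^{1/\beta}}=\Cr(\tilde A_n)+o(1)
\]
cannot hold as written: $\tilde A_n=n^{-1/\beta}\eta[0,n]$ is a finite point set (or, under linear interpolation, a polygonal curve of Hausdorff dimension $1$), hence polar for Brownian motion in $\R^3$, so $\Cr(\tilde A_n)=0$ for every $n$. Donsker's invariance principle does not convert the SRW hitting probability of $A_n$ into a Brownian hitting probability of $\tilde A_n$; the latter is identically zero while the former is of order $n^{1/\beta}/\|y\|$. No amount of uniform control on error terms repairs this, because the putative main term vanishes.

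The paper circumvents this by inserting an intermediate scale: it fattens $\eta[0,n]$ to the sausage $B(\eta[0,n],\delta n^{1/\beta})$, a macroscopic set that both SRW and Brownian motion can hit. The discrete--continuum comparison (your ``scaling relation'') is carried out on the sausage via a strong KMT-type coupling, where it is genuinely valid. The passage from $\eta[0,n]$ to its sausage on the discrete side, and from $\gamma[0,1]$ to its sausage on the continuum side, is exactly where the Beurling-type hitting estimates enter---in particular the paper's Lemma~\ref{lemma:hitting3dLERW}, which says that SRW started within distance $\delta n^{1/\beta}$ of $\eta[0,n]$ hits it with high probability. So the hitting estimate you defer to your second step is in fact already needed to make sense of the first step; once you route the argument through sausages, your two steps merge into the paper's four-step chain \eqref{eq:step1sketch}--\eqref{eq:step4sketch}.
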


As a consequence, we obtain convergence of the capacity-parametrized LERW. Let
\begin{equation}\label{eq:def-capacity-clocks}
    \begin{split}
        \tau_n(s):=&\,\inf\left\{t:n^{-1}\Ca_{\Z^3}(\eta[0,\lfloor t\rfloor])> s\right\},\\
        \theta(s):=&\,\inf\left\{t:\Ca_{\R^3}(\gamma[0,t])> s\right\}
    \end{split}
\end{equation}
be the \emph{capacity clocks} of $\eta$ and $\gamma$ respectively. Define the \emph{capacity-parametrized} LERW $\widetilde{\eta}_n$ and scaling limit $\widetilde{\gamma}$ as
\begin{equation}\label{eq:def-capacity-param-proc}
    \begin{split}
        \widetilde{\eta}_n(s):=n^{-1}\eta(\tau_n(s)),\\
        \widetilde{\gamma}(s):=\gamma(3^{-\beta}\theta(s)).
    \end{split}
\end{equation}
We prove the following result.
\begin{theorem}\label{th:scaling-limit-cap-par}
Let $d=3$. Then $(\widetilde{\eta}_n)_{n\in\N}$ converges weakly to $\widetilde{\gamma}$ with respect to $(\mathcal{C},\chi)$ as $n\rightarrow\infty$.
\end{theorem}

\subsection{Proof sketch}\label{sec:sketch-lerw}
\subsubsection{Dimensions \texorpdfstring{$d\geq5$}{d>=5}}
The proof of the law of large numbers in high dimensions is the most straightforward one. Recall that the capacity can be written as
\begin{equation}
	\frac{1}{n}\Ca(\eta[0,n])=\frac{1}{n}\sum_{k=0}^n\P^W_{\eta(k)}(W[1,\infty)\cap\eta[0,n]=\varnothing\mid\eta).
\end{equation}
If the probabilities in the sum above were i.i.d., a strong law of large numbers would follow immediately. This is of course not the case, but we can still apply an ergodic theorem to deduce the law of large numbers as follows. As $k\rightarrow\infty$, the part of $\eta$ near $\eta(k)$ is distributed as $\heta$. So $\frac{1}{n}\Ca(\eta[0,n])$ and $\frac{1}{n}\Ca(\heta[0,n])$ should have the same limit, which we show in Proposition \ref{prop:reductiontwosided}. Furthermore, $\heta$ is stationary and ergodic. So by Birkhoff's ergodic theorem \cite[Theorem 24.1]{billingsley1995probability},
\begin{equation}
	\begin{split}
		\frac{1}{n}\Ca(\heta[0,n])=&\frac{1}{n}\sum_{k=0}^n\P^W_0(W[1,\infty)\cap(T^k\heta)[-k,n-k]\mid\heta)\\
		\approx&\frac{1}{n}\sum_{k=0}^n\P^W_0(W[1,\infty)\cap(T^k\heta)(-\infty,\infty)\mid\heta)\\
		\rightarrow&\P(W[1,\infty)\cap\heta(-\infty,\infty)=\varnothing\mid W(0)=\heta(0)=0),\qquad n\rightarrow\infty,
	\end{split}
\end{equation}
which was what we wanted to show. The approximate equality comes from the fact that in dimensions $d\geq5$, a LERW and a SRW only intersect each other near their starting points.

\subsubsection{Dimension \texorpdfstring{$d=4$}{d=4}}
The overall picture in four dimensions is the same as in higher dimensions, but there are several subtleties. Dimension 4 is critical for the intersection of LERW and SRW. This means that an infinite SRW and an infinite LERW started from the same point intersect almost surely, and the probability that the first $n$ steps intersect decays polylogarithmically in $n$. So while $\frac{\Ca(\eta[0,n])}{n}\rightarrow0$ almost surely as $n\rightarrow\infty$, we obtain a non-trivial limit if we multiply by a polylogarithmic correction. It turns out that the correct scaling is $\frac{(\log n)^{2/3}}{n}$, which we will now explain.

Like in high dimensions, the strong law of large numbers for $\heta$ is the same as for $\eta$, so we only consider two-sided LERW. We first show in Lemma \ref{lem:Xconvergencetwosided} that the limits
\begin{equation}\label{eq:sketchXinf}
	\begin{split}
		\hat{X}_{\infty}=&\limn\hat{X}_n:=\limn(\log n)^{1/3}\P_0^W(W[1,\infty)\cap\heta[0,n]=\varnothing\mid\heta)\\
	\hat{X}^+_{\infty}=&\limn\hat{X}^+_n:=\limn(\log n)^{1/3}\P_0^W(W[1,\infty)\cap\heta[1,n]=\varnothing\mid\heta)\\
	\end{split}
\end{equation}
exist, building on a very similar result from \cite{lawler19fourdimensional}. In other words, the intersection probability of $W[1,\infty)$ and $\heta[0,n]$ decays like $(\log n)^{-1/3}$. So the intersection probability of $W[1,\infty)$ and $\heta[-n,n]$ should decay like $(\log n)^{-2/3}$. So
\begin{equation}
	\Ca(\eta[0,n])=\sum_{k=0}^n\P_{\heta(k)}^W(W[1,\infty)\cap\heta=\varnothing\mid\heta)\asymp(\log n)^{-2/3}n.
\end{equation}

Note that the escape probabilities in the expression above are \emph{two-sided} escape probabilities: $W$ started from $\eta(k)$ has to escape both $\eta[0,k]$ and $\eta[k+1,n]$. However, we only know the scaling limits of the one-sided escape probabilities from \eqref{eq:sketchXinf}. So in order to obtain a precise expression for the limit, we use a different formula for the capacity. In Lemma \ref{lem:capacitydecomp}, we show that\footnote{This formula for the capacity was known to some people in the field (e.g. Asselah \cite{asselahprivate}), but the author has not seen this precise expression anywhere in the existing literature. A related expression appears in \cite{losev2023long}.}
\begin{equation}
	\begin{split}
		\Ca(\heta[0,n])
        =&\sum_{k=0}^n\P^W_{\heta(k)}(W[1,\infty)\cap\heta[k,n]=\varnothing\mid\heta)\P^W_{\heta(k)}(W[1,\infty)\cap\heta[k+1,n]=\varnothing\mid\heta)
	\end{split}
\end{equation}
In other words, instead of writing the capacity as the sum of two-sided escape probabilities, we write the capacity as the sum of the product of two one-sided escape probabilities. This can be rewritten further in terms of $\hat{X}$ and $\hat{X}^+$:
\begin{equation}
	\begin{split}
		\Ca(\heta[0,n])=&\sum_{k=0}^n(\log k)^{-2/3}\hat{X}_{n-k}(T^k\heta)\hat{X}_{n-k}^+(T^k\heta)\\
		\approx&\sum_{k=0}^n(\log n)^{-2/3}\hat{X}_{\infty}(T^k\heta)\hat{X}_{\infty}^+(T^k\heta).
	\end{split}
\end{equation}
The approximate equality is due to the fact that $W[1,\infty)$ is most likely to hit $\heta[0,n]$ close to the origin. The desired strong law of large numbers now follows from ergodicity and stationarity of $\heta$ with respect to $T$, which we prove in Proposition \ref{prop:ergodicity4}.

\subsubsection{Dimension \texorpdfstring{$d=3$}{d=3}}
The picture in three dimensions is entirely different. The capacity of LERW no longer scales (almost) linearly and the limit law is no longer deterministic. In three dimensions, the capacity of a set is of the same order as its diameter. Since the LERW is roughly $\beta$-dimensional, the correct scaling for $\Ca(\eta[0,n])$ is $n^{1/\beta}$, where $\beta$ is the growth exponent of LERW as defined in Section \ref{sec:d3res}. This also means that the global behaviour of the LERW influences the capacity, which implies that the scaling limit must be non-deterministic. The proof goes as follows. We use the same strategy as in \cite{chang17two}.

Recall that $\gamma$ denotes the scaling limit of LERW in three dimensions. For a set $A$ in~$\Z^3$ or $\R^3$, let $B(A,\varepsilon):=\{x\colon\inf_{a\in A}\|x-a\|\leq\varepsilon\}$ denote the $\varepsilon$-sausage of $A$. We first show that a SRW started from distance $\delta n^{1/\beta}$ from $\eta[0,n]$ is very likely to hit $\eta[0,n]$ for $\delta$ small enough. So the probability that a SRW hits $\eta[0,n]$ is close to the probability that a SRW hits $B(\eta[0,n],\delta n^{1/\beta})$. This implies that 
\begin{equation}\label{eq:step1sketch}
	\Ca_{\Z^3}(\eta[0,n])\approx \Ca_{\Z^3}(B(\eta[0,n],\delta n^{1/\beta})).
\end{equation}
Furthermore, $n^{-1/\beta}\eta[0,n]$ converges in distribution to $\gamma[0,1]$. So we can couple $\eta$ and $\gamma$ in such a way that $B(n^{-1/\beta}\eta[0,n],\delta)\approx B(\gamma[0,1],\delta)$, which implies
\begin{equation}\label{eq:step2sketch}
	\Ca_{\Z^3}(B(\eta[0,n],\delta n^{1/\beta}))\approx\Ca_{\Z^3}(B(n^{1/\beta}\gamma[0,1],\delta n^{1/\beta})).
\end{equation}
Using a strong coupling of SRW and Brownian motion and the fact that $G_{\R^3}(0,y)\sim3G_{\Z^3}(0,y)$, we then show
\begin{equation}\label{eq:step3sketch}
	\Ca_{\Z^3}(B(n^{1/\beta}\gamma[0,1],\delta n^{1/\beta}))\approx3\Ca_{\R^3}(B(n^{1/\beta}\gamma[0,1],\delta n^{1/\beta}))=3n^{1/\beta}\Ca_{\R^3}(B(\gamma[0,1],\delta)).
\end{equation}
Finally, for $\delta$ small, a Brownian motion started from the boundary of $B(\gamma[0,1],\delta)$ will hit $\gamma[0,1]$ with high probability. Thus,
\begin{equation}\label{eq:step4sketch}
	3n^{1/\beta}\Ca_{\R^3}(B(\gamma[0,1],\delta))\approx3n^{1/\beta}\Ca_{\R^3}(\gamma[0,1]).
\end{equation}
The desired law of large numbers then follows by combining \eqref{eq:step1sketch}--\eqref{eq:step4sketch}.

In order to conclude Theorem~\ref{th:scaling-limit-cap-par}, we first strengthen Theorem~\ref{th:d3} to convergence of the entire process $(n^{-1/\beta}\Cz(\eta[0,\lfloor nu\rfloor]))_{u\geq0}$ with respect to the measure~$\chi$. Pointwise convergence follows from Theorem~\ref{th:d3}. Strengthening this to uniform convergence on compacts follows from Arzel\`a-Ascoli, where we use that the difference in capacity of two segments of the LERW is at most a constant times the diameter. It is then a straightforward exercise to conclude the proof of Theorem~\ref{th:scaling-limit-cap-par}. We use that $\Cr(\gamma[0,\cdot])$ is a strictly increasing function, which means it is a continuity point of the inverse function. The factor $3^{-\beta}$ in the definition of $\widetilde\gamma$ is needed to account for the fact that discrete and continuous capacity differ by a factor 3.

\subsection{Outline}\label{sec:outline-lerw}
In Section \ref{sec:highdim}, we give the proof of Theorem \ref{th:highdim}. We prove Theorem \ref{th:d4} in Section \ref{sec:d4}. Finally, we give the proof of Theorems \ref{th:d3} and \ref{th:scaling-limit-cap-par} in Section \ref{sec:d3proof}.

\section{Capacity of LERW in high dimensions}\label{sec:highdim}
In this section, we consider loop-erased random walk in dimensions $d\geq5$ and prove Theorem~\ref{th:highdim}. We first state some facts about two-sided LERW. We then prove Theorem \ref{th:highdim} by showing a lower and upper bound.
\begin{proposition}[\cite{lawler80selfavoiding}]
Two-sided LERW $\heta$ satisfies the following properties:
\begin{enumerate}
	\item the law $\heta$ is absolutely continuous with respect to the law of $\eta$;
	\item $T^k\eta$ converges weakly to $\heta$ as $k\rightarrow\infty$;
	\item $\heta$ is stationary with respect to $T$, i.e., $T\heta\stackrel{d}{=}\heta$;
	\item $\eta$ and $\heta$ are ergodic with respect to $T$.
\end{enumerate}
\end{proposition}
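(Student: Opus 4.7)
The plan is to establish the four properties in sequence, each ultimately resting on non-intersection estimates available in $d\geq 5$.

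For (i), I would unwind the definition of $\heta$ directly. The forward half $\heta[0,\infty)$ is the loop-erasure $\LE(S_1)$ conditioned on $\LE(S_1)\cap S_2[1,\infty)=\varnothing$; conditioning on $\LE(S_1)=\eta$ and integrating out $S_2$ shows that the law of $\heta[0,\infty)$ is absolutely continuous with respect to the law of $\eta$ with Radon--Nikodym derivative
\[
\frac{\P^W_0(W[1,\infty)\cap\eta=\varnothing\mid\eta)}{\E[\P^W_0(W[1,\infty)\cap\eta=\varnothing\mid\eta)]},
\]
which is well-defined because the denominator is strictly positive in $d\geq 5$. The backward half is handled symmetrically, and conditional independence of the two halves given $\heta(0)=0$ completes (i).

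For (ii), I would invoke the Domain Markov property of LERW: conditionally on $\eta[0,k]$, the trajectory $\eta[k,\infty)$ is the loop-erasure of a SRW started at $\eta(k)$ conditioned to avoid $\eta[0,k-1]$. After translating by $-\eta(k)$, the forward part of $T^k\eta$ is a LERW from $0$ conditioned not to hit the translated trace $\eta[0,k]-\eta(k)$. As $k\to\infty$, that translated trace converges in distribution to an independent reversed LERW, and the non-intersection conditioning becomes exactly the one used to construct $\heta$. Making this rigorous reduces to showing convergence of cylinder-event probabilities, controlled by standard high-dimensional intersection estimates together with dominated convergence applied to the non-intersection indicator weighted by the escape probability.

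Stationarity (iii) then follows at once from (ii): $T^{k+1}\eta=T(T^k\eta)$ converges in distribution to $\heta$ directly, and to $T\heta$ by continuity of $T$ on cylinder events, so $T\heta\stackrel{d}{=}\heta$. For ergodicity (iv) of $\heta$, I would prove a spatial mixing statement: for cylinder events $A,B$, $\P(\heta\in A\cap T^{-k}B)\to\P(\heta\in A)\P(\heta\in B)$ as $k\to\infty$, exploiting that $\heta[0,N]$ and $\heta[k,\infty)$ decorrelate since independent SRWs in $d\geq 5$ rarely meet when their starting points are far apart; this gives triviality of the $T$-invariant $\sigma$-algebra. Ergodicity of $\eta$ follows by absolute continuity: if $E$ is a shift-invariant event on one-sided paths with $\P(\eta\in E\triangle T^{-1}E)=0$, then (i) implies the same relation under the law of $\heta[0,\infty)$, hence $\heta$-ergodicity forces $\P(\eta\in E)\in\{0,1\}$.

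The main obstacle is (ii): the convergence involves a double limit, in $k$ for the shift and in the non-intersection conditioning, and one must verify that the conditioning passes cleanly to the limit. The key inputs are a uniform positive lower bound on the one-sided escape probability and a careful coupling of the reversed initial segment $\eta[0,k]-\eta(k)$ with an independent LERW, both of which are classical in $d\geq 5$ but genuinely require the transience and non-intersection estimates specific to those dimensions.
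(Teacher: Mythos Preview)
The paper does not actually prove this proposition: it is stated with a citation to Lawler's original paper and used as a black box (see the attribution \texttt{[lawler80selfavoiding]} in the proposition header, and the absence of any proof in Section~2). So there is no ``paper's own proof'' to compare against; your outline is rather a sketch of what Lawler's argument looks like, and in broad strokes (i)--(iii) are indeed handled along the lines you describe.

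That said, there is a genuine slip in your argument for (iv). You first prove ergodicity of $\heta$ via a mixing statement and then try to deduce ergodicity of $\eta$ from it using absolute continuity. But (i) gives $\heta\ll\eta$, not the reverse, so from $\P(\heta\in E)\in\{0,1\}$ you cannot directly conclude $\P(\eta\in E)\in\{0,1\}$; for this step you would need mutual absolute continuity, i.e.\ that the Radon--Nikodym derivative $\P^W_0(W[1,\infty)\cap\eta=\varnothing\mid\eta)$ is $\eta$-a.s.\ strictly positive. This is true in $d\geq5$, but it is an additional input you did not state. The cleaner route---and the one the paper exploits throughout (see Proposition~2.2 and the four-dimensional analogue in Proposition~3.1)---is the opposite direction: establish ergodicity of $\eta$ first (e.g.\ via a Ces\`aro asymptotic-independence criterion for cylinder events, using the underlying SRW), and then deduce ergodicity of $\heta$ immediately from $\heta\ll\eta$, since any $\heta$-invariant event of nontrivial $\heta$-mass would then also be $\eta$-invariant of nontrivial $\eta$-mass.
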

As a corollary, we show that it suffices to prove a strong law of large numbers for $\Ca(\heta[0,n])$.
\begin{proposition}\label{prop:reductiontwosided}
Assume that $\frac{1}{n}\Ca(\heta[0,n])$ converges almost surely. Then $\frac{1}{n}\Ca(\eta[0,n])$ converges almost surely to the same limit.
\end{proposition}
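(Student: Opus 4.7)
The plan is to exploit the four listed properties of $\heta$ in combination with a simple translation-stability property of the capacity. First I would observe that the function $f_n(\omega):=n^{-1}\C(\omega[0,n])$ is asymptotically $T$-invariant in a \emph{deterministic} sense. Lattice-translation invariance of $\C$ gives $\C((T\omega)[0,n])=\C(\omega[1,n+1])$, and monotonicity of $\C$ together with the elementary bound $\C(\{x\})\leq 1$ (valid since $G(0,0)\geq 1$ in $d\geq 3$) yields
\begin{equation*}
	|\C(\omega[0,n])-\C(\omega[1,n+1])|\leq 1,\qquad\text{hence}\qquad |f_n(T\omega)-f_n(\omega)|\leq \tfrac{1}{n},
\end{equation*}
for every $\omega\in\Omega$ and every $n\geq 1$. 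Consequently the event $E:=\{\omega:(f_n(\omega))_n\text{ converges}\}$ is exactly $T$-invariant, and on $E$ the limit $L(\omega):=\limn f_n(\omega)$ satisfies $L\circ T=L$ identically.

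Next, by hypothesis $\P(\heta\in E)=1$, so $L$ is defined $\heta$-almost surely and is $T$-invariant. Ergodicity of $\heta$ with respect to $T$ (property (iv)) then forces $L$ to equal, $\heta$-almost surely, some deterministic constant $c$. Let $A:=\{\omega\in E:L(\omega)=c\}$; this event is $T$-invariant and of full measure under the law of $\heta$. By absolute continuity of the law of $\heta$ with respect to the law of $\eta$ (property (i)), we have $\P(\eta\in A)>0$, for otherwise $\P(\heta\in A)$ would also vanish. Applying ergodicity of $\eta$ with respect to $T$ (property (iv) again) to the $T$-invariant event $A$ upgrades $\P(\eta\in A)>0$ to $\P(\eta\in A)=1$, which is exactly the claim that $\limn f_n(\eta)=c$ almost surely.

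There is no single hard step here: the argument is a clean combination of the four listed properties with the elementary capacity inequality above. The only place where one must be a little careful is in verifying that the ``boundary effects'' of capacity under a single shift are $O(1)$ rather than growing with $n$; this is what makes $E$ and $\{L=c\}$ \emph{exactly} $T$-invariant, not merely invariant up to a null set, so that the ergodicity dichotomy can be applied under $\eta$ without worrying about the direction of absolute continuity when transferring null sets between the two laws.
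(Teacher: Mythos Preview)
Your proof is correct and follows essentially the same strategy as the paper: use absolute continuity of the law of $\heta$ with respect to that of $\eta$, together with ergodicity, to upgrade positive probability of convergence to full probability. You are more explicit than the paper in two respects --- you justify exact $T$-invariance of the convergence event via the deterministic bound $|f_n(T\omega)-f_n(\omega)|\leq 1/n$ (note that this uses subadditivity of capacity, not merely monotonicity, to get $\C(A\cup\{x\})\leq \C(A)+1$), and you explicitly identify the common limit $c$ by first invoking ergodicity of $\heta$, a point the paper's proof leaves implicit.
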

\begin{proof}
Note that the law of $\heta$ is absolutely continuous with respect to the law of $\eta$. So if $\frac{1}{n}\Ca(\heta[0,n])$ converges almost surely, $\frac{1}{n}\Ca(\heta[0,n])$ converges with positive probability. Since $\eta$ is ergodic with respect to $T$ and $\frac{1}{n}\Ca(\heta[0,n])$ converges if and only if $\frac{1}{n}\Ca((T\heta)[0,n])$ converges, the probability must equal 1.
\end{proof}

\begin{proof}[Proof of Theorem \ref{th:highdim}]
By Proposition \ref{prop:reductiontwosided}, it suffices to prove a strong law of large numbers for the two-sided LERW. We first prove a lower bound. By the definition of capacity and recalling that $\P^W$ is the law of a SRW $W$, we have 
\begin{equation}
	\begin{split}
		\Ca(\heta[0,n])=&\sum_{k=0}^n\P_{\heta(k)}^W(W[1,\infty)\cap\heta[0,n]=\varnothing\mid\heta)\\
		\geq&\sum_{k=0}^n\P_{\heta(k)}^W(W[1,\infty)\cap\heta(-\infty,\infty)=\varnothing\mid\heta)\\
		=&\sum_{k=0}^n\P_{0}^W(W[1,\infty)\cap(T^k\heta)(-\infty,\infty)=\varnothing\mid\heta).
	\end{split}
\end{equation}
Since $\heta$ is ergodic and stationary with respect to $T$, it follows by Birkhoff's ergodic theorem \cite[Theorem 24.1]{billingsley1995probability} that almost surely,
\begin{equation}
	\begin{split}
	\liminf_{n\rightarrow\infty}\frac{1}{n}\Ca(\heta[0,n])=&\E^{\heta}[\P_{0}^W(W[1,\infty)\cap(T^k\heta)(-\infty,\infty)=\varnothing\mid\heta)]\\
		=&\P(W[1,\infty)\cap\heta(-\infty,\infty)=\varnothing\mid W(0)=\heta(0)=0),
	\end{split}
\end{equation}
which settles the lower bound.

We now turn to the upper bound. Let $m\in\N$. It can be seen immediately from Definition \ref{def:capacity} that capacity is subadditive. Thus,
\begin{equation}
	\begin{split}
		\frac{1}{n}\Ca(\heta[0,n])\leq&\frac{1}{n}\sum_{k=0}^{\lceil\frac{n}{m}\rceil-1}\Ca(\heta[mk,m(k+1)])\\
		=&\frac{1}{m}\frac{1}{\frac{n}{m}}\sum_{k=0}^{\lceil\frac{n}{m}\rceil-1}\Ca\left(((T^m)^k\heta)[0,m]\right)
	\end{split}
\end{equation}
From the proofs in \cite{lawler80selfavoiding}, it follows that $\heta$ is also ergodic with respect to any power of $T$, so also with respect to $T^m$. Therefore, again by Birkhoff's ergodic theorem,
\begin{equation}
	\begin{split}
		\limsup_{n\rightarrow\infty}\frac{1}{n}\Ca(\heta[0,n])\leq\frac{1}{m}\E[\Ca(\heta[0,m])].
	\end{split}
\end{equation}
Therefore, it suffices to show 
\begin{equation}
	\limm\E[\Ca(\heta[0,m])]=\P(W[1,\infty)\cap\heta(-\infty,\infty)=\varnothing\mid W(0)=\heta(0)=0).
\end{equation}
Note that as $k,m-k\rightarrow\infty$, we have $\P(W[1,\infty)\cap(\heta(-\infty,-k)\cup\heta(m-k,\infty))\neq\varnothing)\rightarrow0$. So indeed, by stationarity of $\heta$,
\begin{equation}
	\begin{split}
		\E[\Ca(\heta[0,m])]=&\frac{1}{m}\sum_{k=0}^m\P(W[1,\infty)\cap\heta[0,m]=\varnothing\mid W(0)=\heta(k))\\
		=&\frac{1}{m}\sum_{k=0}^m\P(W[1,\infty)\cap\heta[-k,m-k]=\varnothing\mid W(0)=\heta(0)=0)\\
		=&\frac{1}{m}\sum_{k=0}^m\P(W[1,\infty)\cap\heta(-\infty,\infty)=\varnothing\mid W(0)=\heta(0)=0)+o(1)\\
		=&\P(W[1,\infty)\cap\heta(-\infty,\infty)=\varnothing\mid W(0)=\heta(0)=0)+o(1),\qquad m\rightarrow\infty,
	\end{split}
\end{equation}
which concludes the proof of Theorem \ref{th:highdim}.
\end{proof}

\section{Capacity of LERW in four dimensions}\label{sec:d4}
In this section, we consider LERW in four dimensions. We first show that two-sided LERW is ergodic in four dimensions in Section \ref{sec:twosided4}. We then prove Theorem \ref{th:d4} in Section~\ref{sec:proof4}.
\subsection{Ergodicity of two-sided LERW}\label{sec:twosided4}
\begin{proposition}\label{prop:ergodicity4}
The random variable $\heta$ is ergodic with respect to $T$. 
\end{proposition}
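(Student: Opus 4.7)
The plan is to reduce the ergodicity of the two-sided LERW $\heta$ with respect to $T$ to the ergodicity of the one-sided LERW $\eta$ with respect to $T$, and then to establish the latter via a mixing estimate built on the domain Markov property of LERW.

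\textbf{Step 1: Reduction to the positive part.} Set $\heta^+:=\heta[0,\infty)$. The distributional identity $\heta[-a,b]\stackrel{d}{=}\heta[0,a+b]-\heta(a)$ combined with the one-sided stationarity $T\heta^+\stackrel{d}{=}\heta^+$ exhibits the invertible system $(\heta,T)$ as the natural extension of the non-invertible system $(\heta^+,T)$. Since ergodicity passes through the natural extension, $\heta$ is $T$-ergodic iff $\heta^+$ is. It therefore suffices to prove that $\heta^+$ is $T$-ergodic.

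\textbf{Step 2: Transfer from $\heta^+$ to $\eta$ via absolute continuity.} By the definition of $\heta$ and Theorem \ref{th:rescaledescapeprob}, the law of $\heta^+$ is absolutely continuous with respect to that of $\eta$, with Radon-Nikodym derivative $X_\infty/\E[X_\infty]$. One-way absolute continuity already suffices: for any $T$-invariant event $E$, if $\P(\eta\in E)\in\{0,1\}$, then either $E$ or $E^c$ is $\eta$-null and hence $\heta^+$-null, so $\P(\heta^+\in E)\in\{0,1\}$. So it is enough to prove that $\eta$ is $T$-ergodic.

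\textbf{Step 3: Ergodicity of $\eta$ via mixing.} I would prove that for any cylinder events $A,B\subset\Omega$,
\[
\P(\eta\in A,\ T^k\eta\in B)-\P(\eta\in A)\,\P(T^k\eta\in B)\longrightarrow 0\qquad(k\to\infty).
\]
Combined with the weak convergence $T^k\eta\Rightarrow\heta^+$, this decoupling forces the $T$-invariant sigma-algebra to be trivial. The main input is the domain Markov property of LERW: conditional on $\eta[0,k]$, the path $\eta[k,\infty)$ is distributed as the loop-erasure of a simple random walk started at $\eta(k)$ and conditioned to avoid the taboo set $\eta[0,k-1]$. The event $\{T^k\eta\in B\}$ depends only on $\eta$ in a bounded window around $\eta(k)$, and the decoupling from $\{\eta\in A\}$ boils down to showing that the taboo-set conditioning has asymptotically negligible effect on the local law of $\eta$ near $\eta(k)$.

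\textbf{Main obstacle.} The hardest part is quantifying the decoupling in Step 3 at the critical dimension $d=4$, where SRW and LERW intersect almost surely. Non-intersection and escape probabilities decay only polylogarithmically, so the influence of the taboo set persists at large scales and the standard $d\geq 5$ argument of Lawler breaks down. The decoupling must therefore be controlled through the sharp four-dimensional escape estimates of \cite{lawler19fourdimensional}, and extracting even a qualitative mixing statement from these polylogarithmic inputs is where the technical weight of the proof lies.
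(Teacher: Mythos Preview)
Your reduction in Steps~1--2 matches the paper: absolute continuity of $\heta[0,\infty)$ with respect to $\eta$ transfers triviality of the invariant $\sigma$-algebra, so the core task is ergodicity of $\eta$. The divergence is in Step~3 and in your diagnosis of the obstacle.

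First, the paper does not prove pointwise mixing. It proves only the Ces\`aro criterion
\[
\frac{1}{N}\sum_{k=1}^N\bigl[\P(\eta[0,m]=\zeta,\ \eta[k,k+m]=\xi)-\P(\eta[0,m]=\zeta)\,\P(\eta[k,k+m]=\xi)\bigr]\longrightarrow 0,
\]
which already forces ergodicity. Since $\eta$ is not stationary, this also requires showing that the Ces\`aro averages $\frac{1}{N}\sum_k\P(\eta[k,k+m]=\xi)$ converge; the paper extracts this from stationarity of $\heta$ and an approximation of the Radon--Nikodym derivative by simple functions. Your appeal to weak convergence $T^k\eta\Rightarrow\heta^+$ would short-circuit this, but in $d=4$ that statement is not available as an input here.

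Second, and more substantively, the technical engine is not the SRW/LERW non-intersection estimates of \cite{lawler19fourdimensional} but the \emph{cut-time} structure of the underlying SRW $S$ (Lemma~\ref{lemma:cuttimes}, taken from \cite{lawler91intersections}). After the domain Markov step, one must decouple $\LE(S)[k,k+m]$ from a conditioning event $F$ that depends only on $S$ near the origin. The difficulty is that $\LE(S)[k,k+m]$ can in principle depend on all of $S[0,\infty)$, because loops may be macroscopic. Cut times resolve this: if $c$ is a cut time, then the loop-erasure to the right of $c$ coincides with the loop-erasure of $S[c,\infty)$ alone. The paper partitions $[0,n(\log n)^{1/3}]$ into $\asymp(\log n)^{1/2}$ blocks separated by short buffers; by Lemma~\ref{lemma:cuttimes} each buffer contains a cut time with high probability, so the contribution to $H^n(\xi)$ is approximated by a sum over loop-erasures of disjoint SRW segments. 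The effect of $F$ on these later segments then vanishes because conditioning a SRW to avoid a fixed finite set is asymptotically negligible in total variation far from the origin.

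Your proposed route through the escape probabilities of \cite{lawler19fourdimensional} does not supply this regeneration. Those estimates control $\P(W[1,n]\cap\eta=\varnothing)$ for an \emph{independent} walker $W$; they say nothing about localising $\LE(S)[k,k+m]$ as a function of $S$ restricted to a window. Without that localisation, the conditioning event $F$ continues to interact with $\eta[k,k+m]$ through potential long loops of $S$, and the decoupling stalls. So the gap is not in the strategy but in the mechanism: the correct four-dimensional input is cut-time density, not the non-intersection asymptotics.
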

Throughout the paper, we mostly consider LERW $\eta$ as its own process, without reference to the underlying SRW $S$. However, for this proof, we will need several facts about $S$ and the loop-erasing procedure. Recall the definition of the loop-erasure times~$\l_i$ from Definition \ref{def:lerw}. Also define $\rho_j=\max\{i\geq 0\colon\l_i\leq j\}$, which is the number of points up until time $j$ that are kept in the loop-erasure. We say $n$ is a \emph{cut time} of the walk $S$ if $S[0,n]\cap S[n+1,\infty)=\varnothing$. The behaviour of $\l_i$, $\rho_j$ and the cut times of $S$ in four dimensions are well understood. In particular, $\l_i$ and $\rho_j$ are well concentrated around $i(\log i)^{1/3}$ and $j(\log j)^{-1/3}$ and $S$ has a large amount of cut times. We summarize the results we need in the following lemma, whose bounds are implicitly stated in the proofs of \cite[Lemma~7.7.4 and Theorem 7.7.5]{lawler91intersections}.
\begin{lemma}\label{lemma:cuttimes}
We have
\begin{equation}
	\begin{split}
		\limn\E\left[|n-\rho_{n(\log n)^{-1/3}}|\right]=\limn\E\left[|(\log n)^{1/3}n-\l_n|\right]=0.
	\end{split}
\end{equation}
Furthermore, there exists $C>0$ such that for all $n$ and all $m$ such that $|n-m|\geq (\log m)^{-6}m$,
\begin{equation}
	\P(\text{there are no cut times between $n$ and $m$})\leq C\frac{\log\log m}{\log m}.
\end{equation}
\end{lemma}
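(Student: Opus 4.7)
The lemma bundles together two distinct facts from Lawler's \emph{Intersections of Random Walks} (Chapter~7): an $L^{1}$-type concentration of $\l_n$ and $\rho_j$ around their deterministic leading orders, and a lower bound on the density of cut times in intervals $(n,m)$ with $|n-m|\geq m(\log m)^{-6}$. These two halves are essentially independent, so my plan is to attack them separately, in both cases reducing to standard $\Z^4$ non-intersection estimates.

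For the concentration of $\l_n$, I would decompose $\l_n=\sum_{i=1}^n(\l_i-\l_{i-1})$, where each increment is the length of the loop that is erased when LERW is extended from step $i-1$ to step $i$. Conditionally on $\eta[0,i-1]$, this increment has the law of the return time of a fresh SRW started at $\eta(i-1)$ to $\eta[0,i-1]$ (along with the next excursion step). Lawler's $\Z^4$ non-intersection machinery gives $\E[\l_i-\l_{i-1}]=(\log i)^{1/3}(1+o(1))$ together with second moment estimates tight enough to apply Chebyshev after a routine truncation, yielding $\l_n/(n(\log n)^{1/3})\to 1$ in $L^1$. For $\rho_j$ I would invert: the identity $\rho_{\l_n}=n$ together with the monotonicity of $j\mapsto\rho_j$ transfers the concentration of $\l_n$ about $n(\log n)^{1/3}$ into concentration of $\rho_j$ about the corresponding inverse scale, which matches the form written in the lemma.

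For the cut time bound I would run a Paley--Zygmund / second moment argument on the counting variable
\[
X:=\#\{i\in(n,m): S[0,i]\cap S[i+1,\infty)=\varnothing\}.
\]
The one-point probability $\P(i\text{ is a cut time})\asymp (\log i)^{-1/3}$ is the classical two-walk non-intersection estimate in $\Z^4$, which gives $\E X\asymp |n-m|/(\log m)^{1/3}$; under the hypothesis $|n-m|\geq m(\log m)^{-6}$ this is at least $m(\log m)^{-19/3}$, so in particular $\E X\to\infty$. The two-point probability $\P(i,j\text{ both cut})$ for $i<j$ is estimated by applying the Markov property at $i$ and at $j$: the event factors as non-intersection of three independent pieces (past, middle, future) glued at $S(i)$ and $S(j)$. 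Lawler's sharp non-intersection bounds give this two-point function as the product of the one-point functions, up to a multiplicative correction of order $\log\log m$ coming from the mild correlation between the past and future non-intersection constraints. Plugging back, the variance satisfies $\mathrm{Var}(X)\leq C(\E X)^2\log\log m/\log m$, and Paley--Zygmund then gives
\[
\P(X=0)\leq \frac{\mathrm{Var}(X)}{\E[X^2]}\leq C\,\frac{\log\log m}{\log m}.
\]

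The main technical obstacle is the two-point estimate $\P(i,j\text{ cut})$ with the $\log\log m$ correction. All the other steps (first moment, Chebyshev for $\l_n$, monotone inversion for $\rho_j$) are direct once the relevant Lawler inputs are in place, but the two-point bound requires tracking the logarithmic corrections to the $\Z^4$ non-intersection exponent carefully enough that the overlap between the past and future non-intersection events only costs a factor $\log\log m$ rather than a full power of $\log m$. The hypothesis $|n-m|\geq m(\log m)^{-6}$ is exactly calibrated so that $\E X$ is large enough for this $\log\log m/\log m$ variance correction to deliver the claimed bound, and smaller separations are excluded because the Paley--Zygmund ratio would then degrade.
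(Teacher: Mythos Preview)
The paper does not supply its own proof of this lemma: it is stated as a summary of bounds ``implicitly stated in the proofs of Lemma 7.7.4 and Theorem 7.7.5'' of Lawler's \emph{Intersections of Random Walks}, and is then used as a black box. So there is no detailed argument to compare your proposal against; your sketch is effectively a reconstruction of Lawler's arguments, and at the structural level (increment decomposition plus Chebyshev for $\l_n$, monotone inversion for $\rho_j$, second-moment/Paley--Zygmund for cut times) it matches what is done there.

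There is, however, a concrete error in your cut-time half. The one-point probability $\P(i\text{ is a cut time})$ in $\Z^4$ is $\asymp(\log i)^{-1/2}$, not $(\log i)^{-1/3}$: a cut time is a non-intersection event for two \emph{simple} random walks (the past and the future of $S$), and the $\Z^4$ SRW--SRW non-intersection exponent is $1/2$. The exponent $1/3$ is the SRW--LERW escape exponent, which governs $\l_n$ and $\rho_j$ but not cut times. This mistake does not wreck the scheme --- with the correct exponent one still has $\E X\gtrsim m(\log m)^{-13/2}\to\infty$ under the hypothesis $|n-m|\ge m(\log m)^{-6}$, and the Paley--Zygmund step goes through unchanged --- but it should be corrected, and your heuristic for the $\log\log m$ correction in the two-point estimate (``mild correlation between past and future non-intersection constraints'') is the right intuition though it still needs the sharp $\Z^4$ non-intersection asymptotics with error terms, exactly as in Lawler's Lemma~7.7.4, to be made rigorous.
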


\begin{proof}[Proof of Proposition \ref{prop:ergodicity4}]
The proof is an adaptation of \cite[Proposition 5.9]{lawler80selfavoiding}. Let $\zeta=(\zeta_0,\ldots,\zeta_m)$, $\xi=(\xi_0,\ldots,\xi_{m})$ be finite self-avoiding paths. Since $\heta$ is absolutely continuous with respect to $\eta$, it suffices to prove that $\heta$ is ergodic with respect to $T$. Since $\cF$ is generated by cylinder sets, it suffices to show that for all finite self-avoiding paths $\zeta=(\zeta_0,\ldots,\zeta_m)$ and $\xi=(\xi_0,\ldots,\xi_{m})$ \cite[Lemma 6.7.4]{gray09probability},
\begin{equation}
\lim_{N\rightarrow\infty}\frac{1}{N}\sum_{k=1}^N\P(\eta[0,m]=\zeta,\eta[k,k+m]=\xi)=\lim_{N\rightarrow\infty}\frac{1}{N}\sum_{k=1}^N\P(\eta[k,k+m]=\xi)\P(\eta[0,m]=\zeta).
\end{equation}
The domain Markov property for LERW states that the tail of a LERW conditioned to start with a certain path is distributed as the loop-erasure of a SRW conditioned to avoid the path \cite[Section 7.3]{lawler91intersections}. Thus,
\begin{equation}
    \begin{split}
        \P(\eta[k,k+m]=\xi\mid\eta[0,m]=\zeta)=&\P(\eta[k-m,k]=\xi\mid S[1,\infty)\cap\zeta=\varnothing,\,S_0=\zeta_m),
    \end{split}
\end{equation}
where $S$ is the SRW such that $\eta=\LE(S)$. Let $F$ be the event that $S[1,\infty)\cap\zeta=\varnothing$. Then it suffices to prove that for every $\xi$ and $\zeta$ such that $F$ has strictly positive probability,
\begin{equation}
    \lim_{N\rightarrow\infty}\frac{1}{N}\sum_{k=1}^N\P(\eta[k,k+m]=\xi\mid F)=\lim_{N\rightarrow\infty}\frac{1}{N}\sum_{k=1}^N\P(\eta[k,k+m]=\xi).
\end{equation}
Let
\begin{equation}
    H^n(\xi)=\frac{1}{n}\sum_{k=0}^n\1_{\{\eta_{k}^{k+m}=\xi\}}.
\end{equation}
We divide the interval $[0,n(\log n)^{1/3}]$ into $(\log n)^{1/2}$ intervals $I_j$ of size $n(\log n)^{-1/6}$, separated by buffers $L_j$ and $R_j$ of size $n(\log n)^{-6}$ as follows. Let $a_j=\lfloor jn(\log n)^{-1/6}\rfloor$, $a_j^-=a_j-\lfloor n(\log n)^{-6}\rfloor$, and $a_j^+=a_j+\lfloor n(\log n)^{-6}\rfloor$. Define $L_j=[a_j,a_j^+]$, $I_j=[a_j^+,a_{j+1}^-]$ and $R_j=[a_{j+1}^-,a_{j+1}]$.\footnote{The precise size and amount of intervals do not really matter. The important thing is that with high probability, all buffers contains a cut time.} Let 
\begin{equation}
    H^n_j(\xi)=\sum_{k=\rho_{a_j}}^{\rho_{a_{j+1}}-1}\1_{\eta_k^{m+k}=\xi},\qquad \Bar{H}^n_j(\xi)=\sum_{k=0}^{\lfloor a_1(\log a_1)^{-1/3}\rfloor}\1_{\{\LE(S[a_j,a_{j+1}])_k^{k+m}=\xi\}}.
\end{equation}
We show that we can approximate $H^n(\xi)$ by the sum over $\bar{H}_j^n(\xi)$. Firstly,
\begin{equation}
    \begin{split}
        &\left|\E\left[H^n(\xi)-\frac{1}{n}\sum_{j=0}^{\lfloor (\log n)^{1/2}\rfloor}\Bar{H}_j^n(\xi)\mid F\right]\right|\\
        \leq&\left|\E\left[H^n(\xi)-\frac{1}{n}\sum_{k=0}^{\rho_{n(\log n)^{1/3}}}\1_{\{\eta_{k}^{k+m}=\xi\}}\mid F\right]\right|\\
        &+\left|\E\left[\frac{1}{n}\sum_{k=0}^{\rho_{n(\log n)^{1/3}}}\1_{\{\eta_{k}^{k+m}=\xi\}}-\frac{1}{n}\sum_{j=0}^{\lfloor (\log n)^{1/2}\rfloor}\Bar{H}_j^n(\xi)\mid F\right]\right|\\
        \leq&\frac{1}{n}\E[|n-\rho_{n(\log n)^{1/3}}|]\P(F)^{-1}\\
        &+\E\left[\frac{1}{n}
        \sum_{j=0}^{\lfloor(\log n)^{1/2}\rfloor}|H_j^n(\xi)-\Bar{H}_j^n(\xi)|\1_{\{\text{Each $L_j$ and $R_j$ contains a cut time}\}}\right]\P(F)^{-1}\\
        &+\P(\text{There is a $j$ such that $L_j$ or $R_j$ does not contain a cut time})\P(F)^{-1}
    \end{split}
\end{equation}
Consider the event that $L_j$ and $R_j$ contains a cut time. Then $\eta[\rho_{a_j},\rho_{a_{j+1}}]$ and $\LE(S[a_j,a_{j+1}])$ only differ on the sections contributed by $L_j$ and $R_j$. The number of indices of $L_j$ contributes to $\eta$ is $\rho_{a_j^+}-\rho_{a_j}\leq a_j^+-a_j=n(\log n)^{-6}$. The contribution of $L_j$ to $\LE(S[a_j,a_{j+1}])$ is also at most $a_j^+-a_j$. The same estimates hold for $R_j$. Furthermore, the number of indices counted by $H_j^n(\xi)$ and $\Bar{H}_j^n(\xi)$ differs by at most $|\rho_{a_{j+1}}-\rho_{a_j}-\lfloor a_1(\log a_1)^{-1/3}\rfloor|$. By Lemma \ref{lemma:cuttimes}, the probability that $L_j$ or $R_j$ does not contain a cut time is bounded by~$\lesssim\frac{\log\log n}{\log n}$ and $\E|n-\rho_{n(\log n)^{-1/3}}|=o(1)$. Thus, the above is bounded by
\begin{equation}
    \begin{split}
        \lesssim&o(1)+\frac{1}{n}\sum_{j=0}^{\lfloor(\log n)^{1/2}\rfloor}\left(2n(\log n)^{-6}+\E[|\rho_{a_{j+1}}-\rho_{a_j}-\lfloor a_1(\log a_1)^{-1/3}\rfloor|]\right)\\
        &+\frac{\log\log n(\log n)^{1/2}}{\log n}\\
        \lesssim&o(1)+(\log n)^{-5}+\frac{1}{n}
        \sum_{j=0}^{\lfloor(\log n)^{1/2}\rfloor}a_{1}(\log a_1)^{-1/3}o(a_1)+\frac{\log\log n}{(\log n)^{1/2}}\rightarrow0,\qquad n\rightarrow\infty,
    \end{split}
\end{equation}
where $f\lesssim g$ denotes that $f$ is bounded from above by $g$ times some universal constant that is independent of the domain of $f$ and $g$. So it suffices to show that
\begin{equation}
	\limn\E\left[\frac{1}{n}\sum_{j=0}^{\lfloor (\log n)^{1/2}\rfloor}\Bar{H}_j^n(\xi)\mid F\right]=\limn\E\left[\frac{1}{n}\sum_{j=0}^{\lfloor (\log n)^{1/2}\rfloor}\Bar{H}_j^n(\xi)\right].
\end{equation}
First note that we can ignore the term for $j=0$. Furthermore, as $m\rightarrow\infty$, the law of $S[m,\infty)-S_m$ conditioned to avoid a finite set $A$ converges in total variation norm to the law of $S[0,\infty)$. So
\begin{equation}
    \begin{split}
        &\left|\E\left[\frac{1}{n}\sum_{j=1}^{\lfloor (\log n)^{1/2}\rfloor}\Bar{H}_j^n(\xi)\mid F\right]-\E\left[\frac{1}{n}\sum_{j=1}^{\lfloor (\log n)^{1/2}\rfloor}\Bar{H}_j^n(\xi)\right]\right|\\
        \leq&\frac{1}{n}\sum_{j=1}^{\lfloor(\log n)^{1/2}\rfloor}\sum_{k=0}^{a_1(\log a_1)^{-1/3}}|\P(\LE(S[\lfloor a_j\rfloor,\lfloor a_{j+1}\rfloor])_k^{k+m}=\xi\mid F)\\
        &-\P(\LE(S[\lfloor a_j\rfloor,\lfloor a_{j+1}\rfloor])_k^{k+m}=\xi)|\\\rightarrow&0, \qquad n\rightarrow\infty.
    \end{split}
\end{equation}
We conclude that $|\E[H(n,\xi)\mid F]-\E[H(n,\xi)]|\rightarrow0$ as $n\rightarrow\infty$, and so that 
\begin{equation}\label{eq:ergodicitysimple}
    \limn\left|\frac{1}{n}\sum_{k=0}^n\P(\eta[k,k+m]=\xi\mid\eta[0,m]=\zeta)-\frac{1}{n}\sum_{k=0}^n\P(\eta[k,k+m]=\xi)\right|=0
\end{equation}
for all self-avoiding paths $\zeta,\xi$. It only remains to show that, $\frac{1}{n}\sum_{k=0}^n\P(\eta[k,k+m]=\xi)$ converges. Let $f$ denote the Radon-Nikodym derivative of $\heta$ with respect to $\eta$ and let $\E^{\eta}$ denote expectation with respect to $\eta$. Then
\begin{equation}
    \begin{split}
        \P(\heta[0,m]=\xi)=&\frac{1}{n}\sum_{k=1}^n\P(\heta[k,k+m]=\xi)\\
        =&\frac{1}{n}\sum_{k=1}^n\E^{\eta}[f\1_{\eta[k,k+m]=\xi}].
    \end{split}
\end{equation}
Let $\varepsilon>0$. Since $f$ is integrable, there exists a simple function $\sum_i\alpha_i\1_{A_i}$ such that $\E^{\eta}[|f-\sum_{i}\alpha_i\1_{A_i}|]<\varepsilon$ and $A_i$ are cylinder sets. By the above, there exists $N$ such that for all $n\geq N$, we have for all $i$, $|\frac{1}{n}\sum_{k=0}^n\E[\1_{\eta[k,k+m]=\xi}\1_{A_i}]-\frac{1}{n}\sum_{k=0}^n\P(\eta[k,k+m]=\xi)\E[\1_{A_i}]|<\varepsilon$. So for all $n\geq N$, by stationarity of $\heta$, the above equals
\begin{equation}
    \begin{split}
        \sum_i\alpha_i\frac{1}{n}\sum_{k=1}^n\E^{\eta}[\1_{\eta[k,k+m]=\xi}\1_{A_i}]\pm\varepsilon
        =&\sum_i\alpha_i\frac{1}{n}\sum_{k=1}^n\P(\eta[k,k+m]=\xi)\P(\eta\in A_i)\pm2\varepsilon\\
        =&\frac{1}{n}\sum_{k=1}^n\P(\eta[k,k+m]=\xi)\pm3\varepsilon.
    \end{split}
\end{equation}
It follows that $\frac{1}{n}\sum_{k=1}^n\P(\eta[k,k+m]=\xi)\rightarrow\P(\heta[0,m]=\xi)$ as $n\rightarrow\infty$, which concludes the proof.
\end{proof}

\subsection{Proof of Theorem \ref{th:d4}}\label{sec:proof4}
We now prove Theorem \ref{th:d4}. Recall from Proposition \ref{prop:reductiontwosided} that it suffices to prove the strong law of large numbers for $\heta$. We first show the following key lemma, which holds in all dimensions. The proof only uses the strong Markov property of SRW.
\begin{lemma}\label{lem:capacitydecomp}
Let $A=\{x_1,\ldots,x_n\}\subset\Z^d$. Then
\begin{equation}
    \Ca(A)=\sum_{k=1}^n\P_{x_k}(W[1,\infty)\cap\{x_1,\ldots,x_k\}=\varnothing)\P_{x_k}(W[1,\infty)\cap\{x_1,\ldots,x_{k-1}\}=\varnothing).
\end{equation}
\end{lemma}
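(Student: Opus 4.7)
The plan is to recognise both sides of the identity as the leading asymptotic of the hitting probability $\P_y(W[0,\infty)\cap A\neq\varnothing)$ normalised by $G(0,y)$ as $\|y\|\to\infty$, via the first expression in the definition of capacity. The key new ingredient beyond a standard last-visit calculation is to order the decomposition by the \emph{smallest-indexed} element of $A$ that the walk visits; this will naturally produce an escape probability $p_k$ from the forward side and, after sending $y$ to infinity, a non-hitting probability $q_k$ from the killed-Green-function side.

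Write $A_k:=\{x_1,\ldots,x_k\}$, $p_k:=\P_{x_k}(W[1,\infty)\cap A_k=\varnothing)$, and $q_k:=\P_{x_k}(W[1,\infty)\cap A_{k-1}=\varnothing)$. On $\{W[0,\infty)\cap A\neq\varnothing\}$, set $K:=\min\{k:W_t=x_k \text{ for some }t\geq 0\}$ and $T:=\max\{t:W_t=x_K\}$, both a.s.\ finite by transience. Unpacking the definitions,
\[
\{K=k,\,T=t\}=\{W_t=x_k\}\cap\{W[0,t-1]\cap A_{k-1}=\varnothing\}\cap\{W[t+1,\infty)\cap A_k=\varnothing\},
\]
since ``$K=k$'' forces the walk to avoid $A_{k-1}$ at all times and ``$T=t$'' forces no revisit to $x_k$ after time $t$. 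Applying the Markov property at time $t$ and summing over $t\geq 0$ gives
\[
\P_y(K=k)=G^{A_{k-1}}(y,x_k)\,p_k,\qquad \P_y(W[0,\infty)\cap A\neq\varnothing)=\sum_{k=1}^n G^{A_{k-1}}(y,x_k)\,p_k,
\]
where $G^{A_{k-1}}$ denotes the Green function of the walk killed on first entry to $A_{k-1}$.

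To finish I need $G^{A_{k-1}}(y,x_k)/G(0,y)\to q_k$ as $\|y\|\to\infty$. By reversibility of SRW, $G^{A_{k-1}}(y,x_k)=G^{A_{k-1}}(x_k,y)$; applying the strong Markov property at the first entry time $T_{A_{k-1}}$ to the full Green function yields
\[
G(x_k,y)=G^{A_{k-1}}(x_k,y)+\E_{x_k}\!\left[G(W_{T_{A_{k-1}}},y)\,;\,T_{A_{k-1}}<\infty\right].
\]
Dividing by $G(0,y)$ and using the standard asymptotic $G(z,y)/G(0,y)\to 1$ for each fixed $z\in\Z^d$, the expectation on the right converges to $\P_{x_k}(T_{A_{k-1}}<\infty)=1-q_k$, so $G^{A_{k-1}}(y,x_k)/G(0,y)\to q_k$. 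Because the outer sum is over the finite set $\{1,\ldots,n\}$, the limit passes through it, and together with $\P_y(W[0,\infty)\cap A\neq\varnothing)/G(0,y)\to\C(A)$ this yields $\C(A)=\sum_{k=1}^n p_k q_k$.

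The only real analytical step is the limit of the killed Green function, which is routine given that $A_{k-1}$ is finite and the outer sum has only $n$ terms. The combinatorial heart of the argument is the observation that $\{K=k,\,T=t\}$ factorises into three events that mesh perfectly with the Markov property to deliver $p_k$ on one side and a killed Green function carrying $q_k$ on the other.
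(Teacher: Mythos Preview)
Your proof is correct and takes a genuinely different route from the paper's. The paper argues by induction on $n$: starting from the escape-probability formula $\C(A)=\sum_{k}\P_{x_k}(W[1,\infty)\cap A=\varnothing)$, it peels off the dependence on $x_n$ and, via the strong Markov property at the first return to $A_n$ together with a time-reversal identity for first-hitting distributions, shows that $\C(A_n)-\C(A_{n-1})=p_nq_n$. You instead work from the hitting-probability definition of capacity and decompose $\{W\text{ hits }A\}$ directly by the pair $(K,T)$ (minimum visited index, last visit to $x_K$); the Markov property at time $T$ splits off $p_k$ on the forward side, while the backward piece is the killed Green function $G^{A_{k-1}}(y,x_k)$, which after symmetrising and sending $\|y\|\to\infty$ yields $q_k$. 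Your argument is more conceptual and makes the probabilistic meaning of each summand transparent (it is literally $\P_y(K=k)/G(0,y)$ in the limit); the paper's induction is slightly more elementary in that it never leaves the summation formula for capacity and involves no limiting procedure in $y$.
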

\begin{proof}
For $B\subset\Z^d$, let $\tau_B:=\min\{t\geq0\colon W(t)\in B\}$ be the first hitting time of $B$. The proof is by induction. The statement is clearly true for all sets of size 1. Now assume the statement holds for all sets of size $n-1$. Then
\begin{equation}
    \begin{split}
        \Ca(A)=&\sum_{k=1}^n\P_{x_k}(W[1,\infty)\cap\{x_1,\ldots,x_n\}=\varnothing)\\
        =&\sum_{k=1}^n[\P_{x_k}(W[1,\infty)\cap\{x_1,\ldots,x_{n-1}\}
        =\varnothing)\\
        &-\P_{x_k}(W[1,\infty)\cap\{x_1,\ldots,x_{n-1}\}=\varnothing,\,W[1,\infty)\cap\{x_n\}\neq\varnothing)]\\
        =&\Ca(\{x_1,\ldots,x_{n-1}\})+\P_{x_n}(W[1,\infty)\cap\{x_1,\ldots,x_{n-1}\}=\varnothing)\\
        &-\sum_{k=1}^n\P_{x_k}(\tau^+_{x_n}=\tau^+_{\{x_1,\ldots,x_{n}\}}<\infty)\P_{x_n}(W[1,\infty)\cap\{x_1,\ldots,x_{n-1}\}=\varnothing)\\
        =&\Ca(\{x_1,\ldots,x_{n-1}\})\\
        &+\P_{x_n}(W[1,\infty)\cap\{x_1,\ldots,x_{n-1}\}=\varnothing)\left(1-\sum_{k=1}^n\P_{x_n}(\tau_{x_k}^+=\tau_{\{x_1,\ldots,x_n\}}^+<\infty)\right)\\
        =&\Ca(\{x_1,\ldots,x_{n-1}\})\\
        &+\P_{x_n}(W[1,\infty)\cap\{x_1,\ldots,x_{n-1}\}=\varnothing)(1-\P_{x_n}(W[1,\infty)\cap\{x_1,\ldots,x_{n}\}\neq\varnothing))\\
        =&\Ca(\{x_1,\ldots,x_{n-1}\})\\
        &+\P_{x_n}(W[1,\infty)\cap\{x_1,\ldots,x_{n-1}\}=\varnothing)\P_{x_n}(W[1,\infty)\cap\{x_1,\ldots,x_{n}\}=\varnothing)\\
        =&\sum_{k=1}^n\P_{x_k}(W[1,\infty)\cap\{x_1,\ldots,x_k\}=\varnothing)\P_{x_k}(W[1,\infty)\cap\{x_1,\ldots,x_{k-1}\}=\varnothing),
    \end{split}
\end{equation}
which completes the proof.
\end{proof}

The above lemma is very useful, since it rewrites the capacity as the sum of one-sided escape probabilities. This allows us to use Theorem \ref{th:rescaledescapeprob}. First of all, we define
\begin{equation}
	X^+_\infty:=\limn X^+_n:=\limn(\log n)^{1/3}\P^W_0(W[1,n)\cap\eta[1,\infty)=\varnothing\mid\eta),\\
\end{equation}
whose existence was also proved in \cite{lawler19fourdimensional}. Note that $X_n$ and $X^+_n$ are defined as the non-intersection probability of an $n$-step SRW and an infinite LERW. However, for our purposes, we need to study the non-intersection probability of an infinite SRW and an $n$-step LERW. We first show that these have the same scaling limit.
\begin{lemma}\label{lem:convergenceXtilde}
Let 
\begin{equation}
    \begin{split}
        \widetilde{X}^+_n=&(\log n)^{1/3}\P_0^W(W[1,\infty)\cap\eta[1,n]=\varnothing\mid \eta),\\
        \widetilde{X}_n=&(\log n)^{1/3}\P_0^W(W[1,\infty)\cap\eta[0,n]=\varnothing\mid \eta),
    \end{split}
\end{equation}
Then $X_\infty=\limn \widetilde{X}_n$ and $X_\infty^+=\limn\widetilde{X}_n^+$ almost surely and in $L^p$ for every $p<3$.\footnote{It should be possible to prove $L^p$ convergence for all $p>0$, but we do not need it}
\end{lemma}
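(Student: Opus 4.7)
The approach is to compare $\tilde X_n$ and $X_n$ through the symmetric non-intersection quantity
\[
Y_n := (\log n)^{1/3}\, \P_0^W(W[1,n] \cap \eta[0,n] = \varnothing \mid \eta).
\]
By monotonicity both $X_n \le Y_n$ and $\tilde X_n \le Y_n$, and a direct splitting gives
\begin{align*}
Y_n - X_n &= (\log n)^{1/3}\, \P_0^W\!\bigl(W[1,n] \cap \eta[0,n] = \varnothing,\ W[1,n] \cap \eta[n+1,\infty) \neq \varnothing \,\big|\, \eta\bigr), \\
Y_n - \tilde X_n &= (\log n)^{1/3}\, \P_0^W\!\bigl(W[1,n] \cap \eta[0,n] = \varnothing,\ W[n+1,\infty) \cap \eta[0,n] \neq \varnothing \,\big|\, \eta\bigr).
\end{align*}
The plan is to show both of these differences tend to zero in a strong enough sense that, combined with Theorem \ref{th:rescaledescapeprob}, $\tilde X_n \to X_\infty$ almost surely and in $L^p$ for $p<3$; the argument for $\tilde X_n^+$ is entirely parallel, using the analogous quantity $Y_n^+$ with $\eta[1,n]$ in place of $\eta[0,n]$.

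To bound $Y_n - X_n$, I would split $\eta[n+1,\infty)$ at a scale $M = n(\log n)^A$ for $A$ large. The far piece $\eta[M,\infty)$ sits at distance $\gg \sqrt{n \log n}$ from the origin with overwhelming probability (using the four-dimensional LERW growth $|\eta(k)| \asymp k^{1/2}(\log k)^{1/6}$), while an $n$-step SRW is confined to a ball of radius $O(\sqrt{n\log n})$ with very high probability; this contribution is $o((\log n)^{-1/3})$. For the middle piece $\eta[n+1,M]$, the event that $W$ simultaneously avoids $\eta[0,n]$ and hits $\eta[n+1,M]$ is a two-arm non-intersection event, and the 4D intersection estimates of \cite{lawler19fourdimensional} supply an extra $(\log n)^{-1/3}(\log\log n)^{O(1)}$ factor beyond the single-arm scaling $(\log n)^{-1/3}$, more than enough to absorb the $(\log n)^{1/3}$ prefactor. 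For $Y_n - \tilde X_n$ I would condition on $W[0,n]$ and apply the strong Markov property: the probability that $W[n+1,\infty)$ hits the finite set $\eta[0,n]$ starting from $W(n)$ is controlled, by the very definition of capacity, by $G(W(n),\cdot)\,\C(\eta[0,n])$, and since $|W(n)|$ is typically comparable to the diameter of $\eta[0,n]$, combining with the already-imposed non-intersection of $W[1,n]$ and $\eta[0,n]$ once again produces the needed extra $(\log n)^{-1/3}$ factor.

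These estimates first deliver $\E[Y_n - X_n] + \E[Y_n - \tilde X_n] \to 0$; almost sure convergence then follows by upgrading to summable bounds along a geometric subsequence $n_k = 2^k$ and interpolating via monotonicity of the underlying hitting probabilities, combined with the almost sure convergence of $X_n$. For $L^p$ convergence, since $X_n \to X_\infty$ in every $L^p$, uniform integrability of $\{\tilde X_n^p\}$ reduces to $\sup_n \E[(Y_n - X_n)^p] + \sup_n \E[(Y_n - \tilde X_n)^p] < \infty$, which I would obtain by considering $p$ independent copies of $W$ sharing the same $\eta$ and applying the multi-walk 4D non-intersection bounds; the restriction $p<3$ reflects the current range in which these multi-walk estimates close the uniform-in-$n$ bound. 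The main obstacle is the sharp control of the extra intersection terms: each must lose an additional logarithmic factor relative to the one-arm scaling, and this is where the delicate Lawler--Sun--Wu four-dimensional framework is essential.
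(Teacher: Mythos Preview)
Your sandwich via $Y_n$ is exactly the structure of the paper's proof, but you overcomplicate both error terms and introduce two concrete gaps. The paper never uses two-arm estimates: for $Y_n-X_n$ it simply drops the non-intersection condition and enlarges $\eta[n+1,\infty)\subset S[n+1,\infty)$, reducing to the classical SRW--SRW bound $\E^S[\P^W_0(W[1,n]\cap S[n+1,\infty)\neq\varnothing\mid S)]\lesssim(\log n)^{-1}$; for $Y_n-\tilde X_n$ it drops the condition, uses $\eta[0,n]\subset S[0,n(\log n)^2]$ with high probability, and cites an Albeverio--Zhou estimate giving $\lesssim\frac{\log\log n}{\log n}$. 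Your capacity-based argument for $Y_n-\tilde X_n$ does not close as stated: $|W(n)|$ is typically of order $\sqrt n$, the \emph{same} scale as the diameter of $\eta[0,n]$, so the far-field relation $\P_z(\tau_A<\infty)\asymp G(0,z)\C(A)$ is not available, and no clean factorization with the imposed non-intersection on $[1,n]$ drops out without a genuine two-arm analysis.

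Your subsequence $n_k=2^k$ is also too dense. After Markov the error events have probability at best $\lesssim(\log n)^{-2/3}$ (and only $\lesssim(\log n)^{-1/3}$ under the rates your two-arm heuristic would give), and $\sum_k k^{-2/3}$ diverges, so Borel--Cantelli fails. The paper takes $a_n=e^{n^2}$, for which $(\log a_n)^{-2/3}=n^{-4/3}$ is summable, and then recovers the full sequence from the monotonicity interpolation of Lemma~\ref{lem:subseqconv}. Finally, no multi-walk machinery is needed for $L^p$: since each error term is a conditional probability, $\E[P^p]\leq\E[P]$ gives $(\log n)^{1/3}\|P\|_p\lesssim(\log n)^{1/3-1/p}\to0$ exactly for $p<3$, which is where the restriction in the statement comes from.
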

In the proof of this lemma, we will need certain hitting estimates on four-dimensional SRWs. Because they are only used in this proof, we will simply refer the reader to the relevant literature when needed, and not restate them in this paper. We will also need the following analysis lemma, which will be used multiple times in this paper. The lemma allows us to strengthen subsequential convergence to full convergence under certain conditions.
\begin{lemma}\label{lem:subseqconv}
Let $(a_n)_{n\in\N},(b_n)_{n\in\N}$ be strictly increasing sequences and let $f$ be a decreasing function such that
\begin{equation}
	\limn b_{a_n}f(a_n)=x.
\end{equation}
For $n\in\N$, let $k_n$ be the index such that $a_{k_n}\leq n\leq a_{k_{n}+1}$. Then if $\limn\frac{b_{a_{k_n}}}{b_{a_{k_n+1}}}=1$, we have
\begin{equation}
	\limn b_n f(n)=x.
\end{equation}
\end{lemma}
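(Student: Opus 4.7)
The plan is a straightforward sandwich argument exploiting the monotonicity of both $f$ and $(b_n)$. By definition of $k_n$, we have $a_{k_n}\leq n\leq a_{k_n+1}$, so since $f$ is decreasing and $(b_n)$ is strictly increasing,
\begin{equation}
b_{a_{k_n}}f(a_{k_n+1})\;\leq\; b_n f(n)\;\leq\; b_{a_{k_n+1}}f(a_{k_n}).
\end{equation}
I would then rewrite each outer bound so that it factors into a ratio of the form $b_{a_{k_n+1}}/b_{a_{k_n}}$ (or its reciprocal) multiplied by a quantity of the type $b_{a_j}f(a_j)$:
\begin{equation}
b_{a_{k_n+1}}f(a_{k_n})=\frac{b_{a_{k_n+1}}}{b_{a_{k_n}}}\cdot b_{a_{k_n}}f(a_{k_n}),\qquad b_{a_{k_n}}f(a_{k_n+1})=\frac{b_{a_{k_n}}}{b_{a_{k_n+1}}}\cdot b_{a_{k_n+1}}f(a_{k_n+1}).
\end{equation}

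Since $(a_n)$ is strictly increasing it is unbounded, so $k_n\to\infty$ as $n\to\infty$. The hypothesis $b_{a_n}f(a_n)\to x$ then gives that both $b_{a_{k_n}}f(a_{k_n})$ and $b_{a_{k_n+1}}f(a_{k_n+1})$ converge to $x$, while the assumption $b_{a_{k_n}}/b_{a_{k_n+1}}\to1$ forces its reciprocal to the same limit. Consequently both outer bounds converge to $x$, and the squeeze theorem yields $b_n f(n)\to x$. There is no genuine obstacle here; the lemma is a purely analytic statement, and the only subtle point is ensuring that $k_n$ is well defined and tends to infinity, which is automatic from strict monotonicity of $(a_n)$.
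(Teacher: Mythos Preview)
Your proof is correct and essentially identical to the paper's: the paper writes the same sandwich
\[
\frac{b_{a_{k_n+1}}}{b_{a_{k_n}}}\,b_{a_{k_n}}f(a_{k_n})\ \geq\ b_nf(n)\ \geq\ \frac{b_{a_{k_n}}}{b_{a_{k_n+1}}}\,b_{a_{k_n+1}}f(a_{k_n+1})
\]
and concludes by letting $n\to\infty$. Your extra remark that $k_n\to\infty$ is a welcome clarification that the paper leaves implicit.
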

\begin{proof}
Since $b$ is increasing, $f$ is decreasing and $a_{k_n}\leq n\leq a_{k_n+1}$, we have
\begin{equation}
	\frac{b_{a_{k_n+1}}}{b_{a_{k_n}}}b_{a_{k_n}}f(a_{k_n})\geq b_nf(n)\geq\frac{b_{a_{k_n}}}{b_{a_{k_n+1}}}b_{a_{k_n+1}}f(a_{k_n+1}).
\end{equation}
The result follows by taking the limit $n\rightarrow\infty$.
\end{proof}

\begin{proof}[Proof of Lemma \ref{lem:convergenceXtilde}]
We prove the statement for $\widetilde{X}^+_n$, the proof for $\widetilde{X}_n$ being identical. We first prove almost sure convergence. We show an upper and lower bound. 
Let $S$ be the simple random walk such that $\eta=\LE(S[0,\infty)$. Then
\begin{equation}
    \begin{split}
        1-X^+_n\leq&(\log n)^{1/3}\P^W_0(W[1,n]\cap\eta[0,n]\neq\varnothing\mid\eta)\\
        &+(\log n)^{1/3}\P^W_0(W[1,n]\cap\eta[n+1,\infty)\neq\varnothing\mid\eta)\\
        \leq&1-\widetilde{X}^+_n+(\log n)^{1/3}\P_0^W(W[1,n]\cap S[n+1,\infty)\neq\varnothing\mid S).
    \end{split}
\end{equation}
Let $\varepsilon>0$. By \cite[Theorem 4.3.6]{lawler91intersections}, 
\begin{equation}
	\E^S_0[\P^W_0(W[1,n]\cap S[n+1,\infty)\neq\varnothing\mid S)]\lesssim(\log n)^{-1}.
\end{equation}
Hence, by Markov's inequality,
\begin{equation}
	\P^S_0((\log n)^{1/3}\P^W_0(W[1,n]\cap S[n+1,\infty)\neq\varnothing\mid S)>\varepsilon)\lesssim\varepsilon(\log n)^{-2/3}.
\end{equation}
Let $a_n=e^{n^2}$. Then
\begin{equation}
    \begin{split}
        &\sum_{n=1}^{\infty}\P^S_0((\log a_n)^{1/3}\P^W_0(W[1,a_n]\cap S[a_n+1,\infty)\neq\varnothing\mid S)>\varepsilon)\\
        \lesssim&\varepsilon\sum_{n=1}^\infty(\log a_n)^{-2/3}\leq\varepsilon\sum_{n=1}^\infty n^{-4/3}<\infty.
    \end{split}
\end{equation}
So by the Borel-Cantelli lemma, $\limn(\log a_n)^{1/3}\P^W_0(W[1,a_n]\cap S[a_n+1,\infty)\neq\varnothing\mid S)=0$ almost surely. So almost surely,
\begin{equation}
\limsup_{n\rightarrow\infty}\widetilde{X}_{a_n}^+\leq\limsup_{n\rightarrow\infty}X_{a_n}^+\leq X_\infty^+.
\end{equation}
Conversely,
\begin{equation}
    \begin{split}
        1-\widetilde{X}^+_n\leq&(\log n)^{1/3}\P_0^W(W[1,n]\cap\eta[0,n]\neq\varnothing\mid\eta)\\
        &+(\log n)^{1/3}\P^W_0(W[n+1,\infty)\cap\eta[0,n]\neq\varnothing\mid\eta)\\
        \leq&1-X^+_n+(\log n)^{1/3}\P^W_0(W[n+1,\infty)\cap\eta[0,n]\neq\varnothing\mid\eta).
    \end{split}
\end{equation}
Recall from Lemma \ref{lemma:cuttimes} that $\E[\l_n]\sim n(\log n)^{1/3}$. So by Markov's inequality, 
\begin{equation}
	\P(\eta[0,n]\not\subset S[0,n(\log n)^2])\lesssim(\log n)^{-5/3}.
\end{equation}
Furthermore, it follows from \cite[Theorem 1.1]{albeverio96intersections} that 
\begin{equation}
	\E^S_0[\P^W_0(W[n+1,\infty)\cap S[0,n(\log n)^2]\neq\varnothing\mid S)]\lesssim\frac{\log\log n}{\log n}.
\end{equation}
Combining this, we have 
\begin{equation}
	\E^S_0[\P^W_0(W[n+1,\infty)\cap\eta[0,n]\neq\varnothing\mid\eta)]\lesssim\frac{\log\log n}{\log n}.
\end{equation}
So by Markov's inequality,
\begin{equation}
    \begin{split}
        \sum_{n=1}^{\infty}\P_0^S((\log a_n)^{1/3}\P^W_0(W[a_n+1,\infty)\cap \eta[0,a_n]\neq\varnothing\mid \eta)>\varepsilon)\lesssim\sum_{n=1}^\infty\frac{\log\log a_n}{(\log a_n)^{2/3}}<\infty.
    \end{split}
\end{equation}
Another application of Borel-Cantelli gives that 
\begin{equation}
\liminf_{n\rightarrow\infty}\widetilde{X}_{a_n}^+\geq\liminf_{n\rightarrow\infty}X_{a_n}^+=X_\infty^+
\end{equation}
almost surely, and so,
\begin{equation}
\lim_{n\rightarrow\infty}\widetilde{X}_{a_n}^+=X_\infty^+.
\end{equation}
Convergence of the full sequence $\widetilde{X}_n^+$ now follows by applying Lemma \ref{lem:subseqconv} with $f(n)=\P_0^W(W[1,\infty)\cap\eta[0,n]=\varnothing\mid\eta)$ and $b_n=(\log n)^{1/3}$.

We now show convergence in $L^p$ for $1\leq p<3$. Let $\|\cdot\|_p$ denote the $L^p$-norm of a random variable. From the above, we obtain
\begin{equation}\label{eq:lpconvergenceXntilde}
    \begin{split}
        \|X^+_\infty-\widetilde{X}^+_n\|_p\leq&\|X^+_\infty-X^+_n\|_p+\|X^+_n-\widetilde{X}_n\|_p\\
        \leq&\|X^+_\infty-X^+_n\|_p+(\log n)^{1/3}\left(\E^\eta_0\left[\P^W_0(W[1,n]\cap \eta[n+1,\infty)\neq\varnothing\mid\eta)^p\right]\right)^{1/p}\\
        &+(\log n)^{1/3}\left(\E^\eta_0\left[\P^W_0(W[n+1,\infty)\cap \eta[0,n]\neq\varnothing\mid\eta)^p\right]\right)^{1/p}.\\
        \leq&\|X^+_\infty-X^+_n\|_p+(\log n)^{1/3}\left(\E^S_0\left[\P^W_0(W[1,n]\cap S[n+1,\infty)\neq\varnothing\mid\eta)\right]\right)^{1/p}\\
        &+(\log n)^{1/3}\left(\E^S_0\left[\P^W_0(W[n+1,\infty)\cap \eta[0,n]\neq\varnothing\mid\eta)\right]\right)^{1/p}.\\
        \lesssim&\|X^+_\infty-X^+_n\|_p+\frac{(\log n)^{1/3}}{(\log n)^{1/p}}+\frac{(\log\log n)^{1/p}(\log n)^{1/3}}{(\log n)^{1/p}}\rightarrow0,\qquad n\rightarrow\infty,
    \end{split}
\end{equation}
which settles the proof.
\end{proof}

Recall that we want to prove a strong law of large numbers for $\Ca(\heta[0,n])$, rather than $\eta[0,n]$. In the following lemma, we show that the corresponding escape probabilities also converge for the two-sided LERW.
\begin{lemma}\label{lem:Xconvergencetwosided}
There exist random variables $\hat{X}_\infty$ and $\hat{X}^+_\infty$ depending on $\heta$ such that
\begin{equation}
    \begin{split}
        \hat{X}^+_\infty&=\limn \hat{X}^+_n:=\limn(\log n)^{1/3}\P^W_0(W[1,\infty)\cap\hat{\eta}[1,n]=\varnothing\mid \heta),\\
        \hat{X}_\infty&=\limn \hat{X}_n:=\limn(\log n)^{1/3}\P^W_0(W[1,\infty)\cap\hat{\eta}[0,n]=\varnothing\mid\heta)
    \end{split}
\end{equation}
almost surely and in $L^p$ for every $p<3$.
\end{lemma}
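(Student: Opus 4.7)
The plan is to transfer the results of Lemma \ref{lem:convergenceXtilde} from the law of $\eta$ to the law of $\heta$ using the Radon--Nikodym relation built into the definition of two-sided LERW in four dimensions. Observe that $\hat{X}_n$ and $\hat{X}_n^+$ depend on $\heta$ only through $\heta[0,\infty)$, and the law of $\heta[0,\infty)$ is absolutely continuous with respect to the law of $\eta$ with density $X_\infty/\E[X_\infty]$. Moreover $\hat{X}_n$ is literally the functional $\Tilde{X}_n$ evaluated on $\heta[0,\infty)$ in place of $\eta$, and analogously $\hat{X}_n^+$ is $\Tilde{X}_n^+$ applied to $\heta[0,\infty)$.

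For the almost-sure statement, Lemma \ref{lem:convergenceXtilde} guarantees that $\{\Tilde{X}_n \to X_\infty\}$ and $\{\Tilde{X}_n^+ \to X_\infty^+\}$ are $\eta$-almost sure events. By absolute continuity, both remain almost sure under the law of $\heta[0,\infty)$, and hence under the law of $\heta$. Define $\hat{X}_\infty$ and $\hat{X}_\infty^+$ to be the corresponding almost-sure limits; equivalently, they are the random variables obtained by applying the measurable functionals $X_\infty(\cdot)$ and $X_\infty^+(\cdot)$ (well-defined $\eta$-a.s., hence $\heta[0,\infty)$-a.s.) to $\heta[0,\infty)$.

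For the $L^p$ statement with $p<3$, pick $r>1$ small enough that $pr<3$, and let $s=r/(r-1)$ be its H\"older conjugate. Changing measure and applying H\"older's inequality gives
\begin{equation*}
\E\bigl[|\hat{X}_n - \hat{X}_\infty|^p\bigr] \;=\; \frac{1}{\E[X_\infty]}\,\E^\eta\!\bigl[|\Tilde{X}_n - X_\infty|^p\,X_\infty\bigr] \;\leq\; \frac{\|\Tilde{X}_n - X_\infty\|_{pr}^{\,p}\,\|X_\infty\|_{s}}{\E[X_\infty]}.
\end{equation*}
The first factor tends to zero by Lemma \ref{lem:convergenceXtilde} (since $pr<3$), and the second is finite by Theorem \ref{th:rescaledescapeprob} (since $X_\infty \in L^q$ for every $q>0$). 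The identical argument with $X_\infty^+$ and $\Tilde{X}_n^+$ in place of $X_\infty$ and $\Tilde{X}_n$ handles $\hat{X}_n^+$.

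There is essentially no analytic obstacle here: all of the heavy lifting has been done in Lemma \ref{lem:convergenceXtilde} and Theorem \ref{th:rescaledescapeprob}, and what remains is a clean change-of-measure bookkeeping argument. The only point that requires any care is the strict inequality $p<3$, which is needed precisely to leave room to choose $r>1$ with $pr<3$ in the H\"older step, matching exactly the threshold already present in the one-sided statement.
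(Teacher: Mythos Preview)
Your proof is correct and follows essentially the same approach as the paper: almost-sure convergence is inherited from Lemma~\ref{lem:convergenceXtilde} via absolute continuity, and $L^p$ convergence is obtained by changing measure and applying H\"older to separate the Radon--Nikodym density from the $p$-th power of the difference. The only cosmetic difference is that the paper fixes the specific exponent $p'=(3+p)/2$ in the H\"older step, whereas you leave the choice of $r>1$ with $pr<3$ flexible; these are equivalent.
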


\begin{proof}
Almost sure convergence follows from absolute continuity of the law of $\hat{\eta}$ combined with Lemma \ref{lem:convergenceXtilde}. Furthermore, let $f$ be the Radon-Nikodym derivative of $\heta[0,\infty)$ with respect to $\eta$. Let $p<3$. Let $p'=\frac{3+p}{2}$ and let $q'$ be the H\"older conjugate of $p'$. Then by H\"older's inequality,
\begin{equation}
	\begin{split}
		\|\hat{X}_\infty^+-\hat{X}_n^+\|^p_p=&\E^{\eta}[f|\widetilde{X}_\infty^+(\eta)-\widetilde{X}_n^x(\eta)|^p]\\
		\leq&\E^{\eta}\left[f^{\frac{q'}{p}}\right]^{\frac{p}{q'}}\E^{\eta}\left[|\hat{X}_{\infty}^+-\hat{X}_{n}^+|^{p'}\right]^{\frac{p}{p'}}\rightarrow0,\qquad n\rightarrow\infty.
	\end{split}
\end{equation}
For the final step, we use that $f$ is $L^a$ bounded for all $a>0$ and Lemma \ref{lem:convergenceXtilde} in combination with the fact that $p'<3$.
\end{proof}

\subsubsection{Lower bound}
Let $\varepsilon>0$. By Lemma \ref{lem:Xconvergencetwosided}, there exists $n_0=n_0(\heta)$ such that for all $n\geq n_0$, $X_\infty\leq\widetilde{X}_n+\varepsilon$ and $X^+_\infty\leq\widetilde{X}^+_n+\varepsilon$. Then for all $n\geq m$,
\begin{equation}
    \begin{split}
        &\liminf_{n\rightarrow\infty}\frac{(\log n)^{2/3}}{n}\Ca(\heta[0,n])\\
      	=&\liminf_{n\rightarrow\infty}\frac{(\log n)^{2/3}}{n}\sum_{k=0}^n\P^W_{\eta(k)}(W[1,\infty)\cap\eta[k,n]=\varnothing\mid\eta)\P^W_{\eta(k)}(W[1,\infty)\cap\eta[k+1,n]=\varnothing\mid\eta)\\
        \geq&\liminf_{n\rightarrow\infty}\frac{1}{n}\sum_{k=0}^n\hat{X}_n(T^k(\heta))\hat{X}_n^+(T^k(\heta))\\
        \geq&\liminf_{n\rightarrow\infty}\frac{1}{n}\sum_{k=0}^n\hat{X}_\infty(T^k(\heta))\hat{X}_\infty^+(T^k(\heta))\1_{\{m\geq n_0\}}(T^k(\heta))-\varepsilon\\
        =&\E[\hat X_\infty \hat X_{\infty}^+\1_{\{m\geq n_0\}}]-\varepsilon\\
        \rightarrow&\E[\hat X_\infty\hat X^+_\infty]-\varepsilon,\qquad m\rightarrow\infty
    \end{split}
\end{equation}
The first line follows from Lemma \ref{lem:capacitydecomp}, the fourth line follows from Birkhoff's ergodic theorem \cite[Theorem 24.1]{billingsley1995probability}, and the last line follows from the monotone convergence theorem. Since $\varepsilon>0$ was arbitrary, we conclude that
\begin{equation}
	\liminf_{n\rightarrow\infty}\frac{(\log n)^{2/3}}{n}\Ca(\heta[0,n])\geq\E[\hat X_\infty\hat X^+_\infty].
\end{equation}

\subsubsection{Upper bound}
We show an upper bound for $\frac{1}{n}\Ca(\heta[0,\lfloor n(\log n)^{2/3}\rfloor])$.
The strategy is similar to the upper bound in high dimensions. Let $m\in\N$. By subadditivity of capacity,
\begin{equation}
	\begin{split}
		\frac{(\log n)^{2/3}}{n}\Ca(\heta[0,n])\leq&\limsup_{n\rightarrow\infty}\sum_{k=0}^{\lceil\frac{n(\log n)^{-2/3}}{m(\log m)^{-2/3}}\rceil-1}\Ca\left(\heta[mk,m(k+1)]\right)\\
		\leq&\frac{(\log m)^{2/3}}{m}\frac{1}{\frac{n(\log n)^{-2/3}}{m(\log m)^{-2/3}}}\sum_{k=0}^{\left\lceil\frac{n(\log n)^{-2/3}}{m(\log m)^{-2/3}}\right\rceil-1}\Ca(((T^m)^k\heta)[0,m])
	\end{split}
\end{equation}
By the exact same proof as the proof of Proposition \ref{prop:ergodicity4}, it follows that every power of $T$ is also ergodic. So by Birkhoff's ergodic theorem,
\begin{equation}
	\limsup_{n\rightarrow\infty}\frac{(\log n)^{2/3}}{n}\Ca(\heta[0,n])\leq\frac{(\log m)^{2/3}}{m}\E[\Ca(\heta[0,m])]
\end{equation}
almost surely for all $m$. It remains to show that
\begin{equation}
    \limsup_{m\rightarrow\infty}\frac{(\log m)^{2/3}}{m}\E[\Ca(\heta[0,m])]\leq\E[\hat X_\infty \hat X^+_\infty].
\end{equation}

Let $W^1,W^2$ be simple random walks and let $\varepsilon>0$. Then
\begin{equation}
    \begin{split}
        &\frac{(\log m)^{2/3}}{m}\E[\Ca(\heta[0,m])]\\
        =&\frac{(\log m)^{2/3}}{m}\sum_{k=0}^m\P(W^1[1,\infty)\cap\heta[0,k]=\varnothing,\,W^2[1,\infty)\cap\heta[1,k]\mid W^1_0=W^2_0=\heta(k))\\
        =&\frac{1}{m}\sum_{k=0}^m\frac{(\log m)^{2/3}}{(\log k)^{2/3}}\E[\hat{X}_{k}\hat{X}_{k}^+]\\
        \leq&\frac{1}{m}\sum_{k=\lfloor n^{1-\varepsilon}\rfloor}^m\frac{(\log m)^{1/3}}{(\log m^{1-\varepsilon})^{1/3}}\E[\hat{X}_{k}\hat{X}_{k}^+] + \frac{1}{m}m^{1-\varepsilon}(\log m)^{1/3}\\
        \leq&\frac{1}{(1-\varepsilon)^{1/3}m}\sum_{k=0}^m\E[\hat{X}_{k}\hat{X}_{k}^+]+o(1)\\
        \rightarrow&(1-\varepsilon)^{-1/3}\E[\hat{X}_{\infty}\hat{X}_{\infty}^+],\qquad m\rightarrow\infty.
    \end{split}
\end{equation}
The convergence in the last line follows from Lemma \ref{lem:Xconvergencetwosided}. Since $\varepsilon>0$ was arbitrary, the proof is complete.

\section{Capacity of LERW in three dimensions}\label{sec:d3proof}
This section is devoted to the proof of Theorem \ref{th:d3}. We first prove a technical hitting lemma in Section \ref{sec:hittinglemma}. In Section \ref{sec:d3proofsub}, we  prove Theorem \ref{th:d3}.
\subsection{A hitting estimate}\label{sec:hittinglemma}
As mentioned in Section \ref{sec:sketch-lerw}, for the proof we need that a SRW started close to $\eta[0,n]$ will hit it with high probability. There are several known results in this direction (most notably \cite[Lemma 3.3]{sapozhnikov18brownian} and \cite[Proposition 3.7]{angel21scaling}), but none match the exact statement we need.

\begin{lemma}\label{lemma:hitting3dLERW}
There exist $C,\alpha>0$ such that for all $\varepsilon>0$, there exists $\delta_0$ such that for all $\delta\leq\delta_0$ and for all $n$,
\begin{equation}
    \P\left(\max_{z\in B(\eta[0,n],\delta n^{1/\beta})}\P^W_z(W[0,\infty)\cap \eta[0,n]=\varnothing\mid \eta)>\varepsilon\right)< C\varepsilon^{1/\alpha}.
\end{equation}
\end{lemma}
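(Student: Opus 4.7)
The plan is to reduce the statement to a single-point hittability estimate and then upgrade it to a uniform one via a covering argument. The target single-point estimate is: for each $z$ with $d(z,\eta[0,n])\leq\delta n^{1/\beta}$,
\begin{equation*}
\E\bigl[\P^W_z(W[0,\infty)\cap\eta[0,n]=\varnothing\mid\eta)\bigr]\lesssim\delta^{\alpha_0}
\end{equation*}
for some $\alpha_0>0$. Markov's inequality then converts this into $\P(h_z(\eta)>\varepsilon)\lesssim\delta^{\alpha_0}/\varepsilon$ for $h_z(\eta):=\P^W_z(W[0,\infty)\cap\eta[0,n]=\varnothing\mid\eta)$, and choosing $\delta$ as a small power of $\varepsilon$ should give the desired $C\varepsilon^{1/\alpha}$ bound.

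For the single-point estimate, I would fix $z$ and let $\eta(k)$ be a point of $\eta[0,n]$ closest to $z$. Introduce an intermediate scale $r=\rho n^{1/\beta}$ with $\delta\ll\rho\ll 1$, to be optimized. On an $\eta$-event of probability at least $1-C\rho^{\alpha_1}$ uniform in $k$ and $n$, the trace of $\eta$ inside $B(\eta(k),r)$ contains a LERW strand of diameter comparable to $r$ with controlled geometry. Applying the hittability results of \cite{sapozhnikov18brownian} and \cite{angel21scaling} to this strand, a SRW from $z$ hits it before exiting $B(\eta(k),2r)$ with probability at least $1-(\delta/\rho)^{\alpha_2}$. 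Combining, the expected miss probability is $\lesssim\rho^{\alpha_1}+(\delta/\rho)^{\alpha_2}$, and optimizing over $\rho$ yields a bound of $\lesssim\delta^{\alpha_0}$.

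To obtain the supremum over $z\in B(\eta[0,n],\delta n^{1/\beta})$, I would use that $h_z$ is harmonic in $z$ off $\eta[0,n]$ and vanishes on $\eta[0,n]$, so a Harnack-type bound reduces the supremum to finitely many deterministic offsets around each $\eta(k)$. The main obstacle is then the $n$-dependence: a naive union bound over $k\in\{0,\dots,n\}$ would cost an extra factor of $n$. I would kill this with a multiscale decomposition -- partition $\eta[0,n]$ into $O(\log n)$ dyadic scales chosen so that the hittability events for well-separated blocks depend on essentially independent pieces of $\eta$ (via the cut-point structure of the underlying SRW), and combine the per-scale Markov estimates. Together with choosing $\delta$ as a suitable power of $\varepsilon$, this should yield the claimed $C\varepsilon^{1/\alpha}$ estimate uniformly in $n$.
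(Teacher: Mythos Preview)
Your single-point hittability plan and the reduction via harmonicity are reasonable, but the final union-bound step has a genuine gap. You identify the ``main obstacle'' as the factor of $n$ from summing over $k\in\{0,\dots,n\}$ and propose to kill it by a multiscale decomposition exploiting cut-times of the underlying walk. That argument is not fleshed out, and it is not clear it can be made to work: even after localizing each $h_{z_k}$ to depend only on the trace of $\eta$ in a ball around $\eta(k)$, the pieces of $\eta$ in well-separated balls are not independent in any way that beats a straight union bound. In fact the obstacle is misdiagnosed. Since $\eta[0,n]$ typically has diameter of order $n^{1/\beta}$, covering $B(\eta[0,n],\delta n^{1/\beta})$ by balls of radius $\delta n^{1/\beta}$ needs only $\delta^{-O(1)}$ balls, not $n$; the union bound then costs a factor $\delta^{-O(1)}$, and the real question becomes whether the hittability exponent $\alpha_0$ you extract from \cite{sapozhnikov18brownian,angel21scaling} is large enough to absorb this after setting $\delta=\varepsilon^{c}$. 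You have not checked that balance, and the exponents in those references are not obviously large enough.

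The paper sidesteps this exponent comparison by organizing the union bound spatially over $z\in\Z^3$ rather than along $\eta$. It splits according to whether $z$ lies outside or inside $B(0,r)$ with $r=\varepsilon^{-1/\alpha}\delta n^{1/\beta}$. For $z$ outside, \cite[Lemma~3.3]{sapozhnikov18brownian} bounds the probability of the \emph{joint} event that $\eta$ enters $B(z,\delta n^{1/\beta})$ and that the walk from $z$ misses $\eta$ up to that entry, by $C(\delta n^{1/\beta}/\|z\|)^4$; summing this over a grid of spacing $\delta n^{1/\beta}$ gives $\lesssim \delta n^{1/\beta}/r=\varepsilon^{1/\alpha}$ directly, with no competing covering exponent. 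For $z\in B(0,r)$, the paper invokes \cite[Proposition~3.7]{angel21scaling} for hittability of $\eta[0,\infty)$ near the origin, together with the scaling-limit convergence to show that $\eta(n,\infty)$ does not re-enter a small ball around $0$, thereby passing from $\eta[0,\infty)$ to $\eta[0,n]$. This near-origin reduction is also absent from your sketch.
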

For the proof, we reformulate the hitting estimate we need in terms of what was shown in \cite[Lemma 3.3]{sapozhnikov18brownian} and \cite[Proposition 3.7]{angel21scaling}. This has as advantage that we can use these results as a `black box', at the cost of making the proof somewhat opaque.

\begin{proof}[Proof of Lemma \ref{lemma:hitting3dLERW}]
Let $\alpha>0$ be some constant, whose exact value we will specify later. Let $s=\delta n^{1/\beta}$ and $r=\varepsilon^{-1/\alpha}s$. Then
\begin{equation}\label{eq:splitball}
    \begin{split}
        &\P^\eta\left(\max_{z\in B(\eta[0,n],\delta n^{1/\beta})}\P^W_z(W[0,\infty)\cap \eta[0,n]=\varnothing\mid \eta)>\varepsilon\right)\\
        \leq&\sum_{z\not\in B(0,r)}\P^\eta\left(\left\{\eta[0,n]\cap B(z,s)\neq\varnothing\right\}\cap\left\{\P^W_z(W[0,\infty)\cap \eta[0,n]=\varnothing\mid \eta)>\varepsilon\right\}\right)\\
        &+\P^\eta\left(\max_{z\in B(0,r)}\P^W_z(W[0,\infty)\cap \eta[0,n]=\varnothing\mid \eta)>\varepsilon\right).
    \end{split}
\end{equation}
We bound both terms separately. Let $\tau^\eta_A:=\min\{t\geq0\colon \eta(t)\in A\}$. If $z\in B(\eta[0,n],s)$, then $\tau^\eta_{B(z,s)}\leq n$. Furthermore, for $\|z\|\geq r$, we have $(\frac{s}{\|z\|})^{\alpha}\leq(\frac{s}{r})^{\alpha}=\varepsilon$. Hence, by \cite[Lemma 3.3]{sapozhnikov18brownian}, there exists $\alpha$ and a constant $C_1$ such that for all $n$,
\begin{equation}\label{eq:firstterm}
    \begin{split}
        &\sum_{z\not\in B(0,r)}\P^\eta\left(\eta[0,n]\cap B(z,s)\neq\varnothing,\,\P^W_z(W[0,\infty)\cap \eta[0,n]=\varnothing\mid \eta)>\varepsilon\right)\\
        \leq&\sum_{z\not\in B(0,r)}\P^\eta\left(\eta[0,\infty)\cap B(z,s)\neq\varnothing,\,\P^W_z(W[0,\infty)\cap \eta[0,\tau^{\eta}_{B(z,s)}]=\varnothing\mid \eta)>(\tfrac{s}{\|z\|})^{\alpha}\right)\\
        \leq&\sum_{z\not\in B(0,r)} C_1\left(\frac{s}{\|z\|}\right)^{4}\\
        \lesssim&\frac{s}{r}=\varepsilon^{1/\alpha}.
    \end{split}
\end{equation}
We now bound the second term of \eqref{eq:splitball}. Let $\hat{\alpha}$ be the constant $\hat\eta$ from \cite[Proposition 3.7]{angel21scaling}. Then there exists $C_2$ such that for all $\delta\leq\varepsilon^{1/\hat\alpha}$,
\begin{equation}
    \begin{split}
        &\P^{\eta}\left(\max_{z\in B(0,r)}\P_z^W(W[0,\infty)\cap\eta[0,n]=\varnothing)>\varepsilon\right)\\
        \leq&\P^{\eta}\left(\left\{\max_{z\in B(0,r)}\P_z^W(W[0,\infty)\cap\eta[0,n]=\varnothing)>\delta^{\hat\alpha}\right\}\right.\\
        &\left.\cap\left\{\eta(n,\infty)\cap B(0,2\delta^{1/2}\varepsilon^{-1/\alpha} n^{1/\beta})=\varnothing\right\}\right)\\
        &+\P^{\eta}\left(\eta(n,\infty)\cap B(0,2\delta^{1/2}\varepsilon^{-1/\alpha} n^{1/\beta})\neq\varnothing\right)\\
        \leq&\P^{\eta}\left(\max_{z\in B(0,\delta\varepsilon^{-1/\alpha}n^{1/\beta})}\P_z^W(W[0,\tau^W_{B(z,\delta^{1/2}\varepsilon^{-1/\alpha}n^{1/\beta})^c}]\cap\eta[0,\infty)=\varnothing)>\delta^{\hat\alpha}\right)\\
        &+\P^{\eta}\left(\eta(n,\infty)\cap B(0,2\delta^{1/2}\varepsilon^{-1/\alpha} n^{1/\beta})\neq\varnothing\right)\\
        \leq&C_2\delta+\P^{\eta}\left(\eta(n,\infty)\cap B(0,2\delta^{1/2}\varepsilon^{-1/\alpha} n^{1/\beta})\neq\varnothing\right).
    \end{split}
\end{equation}
We bound the final probability. Note that
\begin{equation}
    \begin{split}
        \P^{\eta}\left(\eta(n,\infty)\cap B(0,2\delta^{1/2}\varepsilon^{-1/\alpha} n^{1/\beta})\neq\varnothing\right)=&\P^{\eta}\left(n^{-1/\beta}\eta(n,\infty)\cap B(0,2\delta^{1/2}\varepsilon^{-1/\alpha})\neq\varnothing\right)
    \end{split}
\end{equation}
By convergence of $n^{-1/\beta}\eta(n,\infty)$ to the scaling limit $\gamma(1,\infty)$, the final probability can be made arbitrarily small by choosing $\delta$ sufficiently small. Hence, there exists $\delta_0$ such that for all $\delta\leq\delta_0$, and all $n$,
\begin{equation}\label{eq:secondterm}
    \begin{split}
        \P^{\eta}\left(\max_{z\in B(0,r)}\P_z^W(W[0,\infty)\cap\eta[0,n]=\varnothing)>\varepsilon\right)\leq C_3\varepsilon^{1/\alpha}.
    \end{split}
\end{equation}
We conclude the proof by combining \eqref{eq:firstterm} and \eqref{eq:secondterm}.
\end{proof}

\subsection{Proof of Theorem \ref{th:d3}}\label{sec:d3proofsub}
We now prove Theorem \ref{th:d3}. In fact, we prove the following stronger statement.
\begin{proposition}\label{prop:d3-general}
Let $\ell\in\N$ and let $0\leq u_1\leq\ldots\leq u_{\ell}$. There exists a coupling of $(n^{-1/\beta}\eta(n\cdot))_{n\in\N}$ and $\gamma$ such that $(n^{-1/\beta}\eta(n\cdot))_{n\in\N}$ converges almost surely to $\gamma$ in $(\mathcal{C},\chi)$ and for all $u_i$,
\begin{equation}
    \limn\frac{\Cz(\eta[0,\lfloor nu_i\rfloor]}{3n^{1/\beta}}=\Cr(\gamma[0,u_i]).
\end{equation}
\end{proposition}
The proof is divided into three main steps, plus a preparation step. In each of the main steps we give a rigorous formulation and proof of the approximate equalities \eqref{eq:step1sketch}--\eqref{eq:step3sketch}.

\begin{proof}[Proof of Proposition \ref{prop:d3-general}]
Let $u_1,\ldots,u_{\ell}$. We first show that we can couple $(n^{-1/\beta}\eta(n\cdot))_{n\in\N}$ and $\gamma$ such that for all $u\in\{u_1,\ldots,u_\ell\}$,
\begin{equation}\label{eq:lln3dupperbound}
    \frac{1}{n^{1/\beta}}\Ca_{\Z^3}(\eta[0,\lfloor n u\rfloor])\leq3\Ca_{\R^3}(\gamma[0,u])+o(1),\qquad n\rightarrow\infty.
\end{equation}
The proof of the reverse inequality is almost analogous and we briefly remark on it at the end.

\paragraph{Step 0.}
We first show that with high probability, $\eta$ and its scaling limit $\gamma$ are well-behaved. Let $\varepsilon>0$ and let $\delta>0$. We will later choose $\delta>0$ to be small depending on $\varepsilon$ and $u_{\ell}$ and then $n$ to be sufficiently large depending on $\delta,\varepsilon,u_{\ell}$. Let $W$ be an independent simple random walk on $\Z^d$ and $M$ be an independent Brownian motion on $\R^d$. Define the events
\begin{equation}
    \begin{split}
        E_1(n,\delta,u_{\ell}):=&\left\{\sup_{0\leq t\leq u_{\ell}}\left\|\frac{1}{n^{1/\beta}}\eta(\lfloor n t\rfloor)-\gamma(t)\right\|\leq\delta\right\},\\
        E_2(n,\delta,\varepsilon,u_1,\ldots,u_{\ell}):=&\left\{\max_{1\leq i\leq \ell}\sup_{z\in B(\eta[0,\lfloor nu_i\rfloor],\delta n^{1/\beta})}\P^W_z(W[0,\infty)\cap \eta[0,\lfloor nu_i\rfloor]=\varnothing\mid \eta)<\varepsilon\right\},\\
        E_3(\delta,\varepsilon,u_1,\ldots,u_{\ell}):=&\left\{\max_{1\leq i\leq\ell}\sup_{z\in B(\gamma[0,u_i],\delta)}\P^M_z(M[0,\infty)\cap \gamma[0,u_i]=\varnothing\mid \gamma)<\varepsilon\right\},\\
        E_4(\delta,u_{\ell}):=&\left\{\sup_{0\leq t\leq u_{\ell}}\|\gamma(t)\|\geq\delta^{-1}\right\},\\
        E(n,\delta,\varepsilon,u_1,\ldots,u_{\ell}):=&E_1\cap E_2\cap E_3\cap E_4.
    \end{split}
\end{equation}
By convergence of $n^{-1/\beta}\eta(n\cdot)$ to $\gamma$, for all $\delta>0$ and $n$ sufficiently large depending on $\delta$, we have $\P(E_1)>1-\varepsilon$. Furthermore, by Lemma \ref{lemma:hitting3dLERW}, there exist constants $C,\alpha>0$ such that for all $\delta$ sufficiently small and all $n>0$, $\P(E_2)>1-C\ell\varepsilon^{1/\alpha}$. Moreover, since the Hausdorff dimension of $\gamma[0,u_i]$ is strictly greater than 1, $\sup_{z\in B(\gamma[0,u_i],\delta)}\P^M_z(M[0,\infty)\cap \gamma[0,u_i]=\varnothing\mid\gamma)$ tends to 0 as $\delta\downarrow 0$ almost surely. So for $\delta$ sufficiently small, $\P(E_3)>1-\varepsilon$. Lastly, since $\gamma[0,u_{\ell}]$ is bounded almost surely, $\P(E_4)>1-\varepsilon$ for $\delta$ sufficiently small. Combining the above, we obtain that for all $\varepsilon>0$, for all $\delta>0$ sufficiently small and for all $n$ sufficiently large, we have $\P(E(n,\delta,\varepsilon,u_i))\geq1-\varepsilon$. Thus, we can couple $(n^{-1/\beta}\eta(n\cdot))_{n\in\N}$ and $\gamma$ such that $E(n,\delta_n,\varepsilon_n,u_i)$ holds for all $n$ for some sequences $(\varepsilon_n)_{n\in\N},(\delta_n)_{n\in\N}$ satisfying $\varepsilon_n,\delta_n\downarrow0$ as $n\rightarrow\infty$. Now let $u\in\{u_1,\ldots,u_{\ell}\}$. We show that \eqref{eq:lln3dupperbound} holds under this coupling.

\paragraph{Step 1.}
We first show that under the coupling above, we can approximate the capacity of $\eta[0,\lfloor nu\rfloor]$ by the capacity of a sufficiently small sausage around it. Let $\varepsilon,\delta>0$. Let $A\subset B\subset\Z^d$ and denote by $h(B)$ the harmonic measure on $B$. It is not difficult to prove that
\begin{equation}\label{eq:capacitysubsetZd}
    \Ca_{\Z^3}(A)=\Ca_{\Z^3}(B)\P_{h(B)}^W(W[0,\infty)\cap A\neq\varnothing).
\end{equation}
Similarly, for $A\subset B\subset\R^d$, we have
\begin{equation}\label{eq:capacitysubsetRd}
    \Ca_{\R^3}(A)=\Ca_{\R^3}(B)\P_{h(B)}^M(M[0,\infty)\cap A\neq\varnothing).
\end{equation}
Note that on the event $E\subset E_2$, for $n$ sufficiently large, we have
\begin{equation}
    \P_{h(B(\eta[0,\lfloor nu\rfloor],3\delta n^{1/\beta}))}^W(W[0,\infty)\cap \eta[0,\lfloor nu\rfloor]\neq\varnothing)\geq 1-\varepsilon.
\end{equation}
Thus, 
\begin{equation}
    \frac{1}{n^{1/\beta}}\Ca_{\Z^3}(\eta[0,\lfloor nu\rfloor])\geq (1-\varepsilon)\frac{1}{n^{1/\beta}}\Ca_{\Z^3}(B(\eta[0,\lfloor nu\rfloor],3\delta n^{1/\beta})).
\end{equation}

\paragraph{Step 2.}
We now show that $\Ca_{\Z^3}(B(\eta[0,\lfloor nu\rfloor],3\delta n^{1/\beta}))\approx3\Ca_{\R^3}(B(\eta[0,\lfloor nu\rfloor],3\delta n^{1/\beta}))$. We do this by showing that the probability that $B(\eta[0,\lfloor nu\rfloor],3\delta n^{1/\beta})$ is hit by a random walk on $\Z^3$ is close to the probability that $B(\eta[0,\lfloor nu\rfloor],3\delta n^{1/\beta})$ is hit by Brownian motion on $\R^3$. For this part, we need several standard estimates on the hitting probabilities and displacement of simple random walk and Brownian motion, which we will state without proof.

By \cite[Theorem 2]{einmahl1989extensions}, for a SRW $W$ and Brownian motion $M$ started from the same point there exists a coupling of $W$ and $M$ such that almost surely,
\begin{equation}
        \sup_{0\leq k\leq e^{m^{1/4}}}\|W_{k}-M_{k/3}\|\leq\sqrt{m}
\end{equation}
for every $m$. This is a very strong coupling, which is somewhat overkill for our purposes. Recall that on the event $E$, $n^{1/\beta}\gamma[0,u]\subset B(0,2n^{1/\beta}\delta_n^{-1})$. Now consider $W$ and $M$ started from the harmonic measures on $B(0,2n^{1/\beta}\delta^{-1})$ in $\Z^3$ and $\R^3$ respectively. Since the harmonic measure on $B(0,2\delta^{-1})$ in $n^{-1/\beta}\Z^3$ converges weakly to the harmonic measure on $B(0,2\delta^{-1})$ in $\R^3$ as $n\rightarrow\infty$, we can couple $W$ and $M$ such that $\|W_0-M_0\|\leq\frac{1}{4}\delta n^{1/\beta}$ and thus
\begin{equation}
    \begin{split}
        \sup_{0\leq k\leq n^{100/\beta}}|W_{k}-M_{k/3}|\leq\frac{2}{4}\delta n^{1/\beta}.
    \end{split}
\end{equation}
Furthermore, it is not difficult to show that 
\begin{align}
    \P_{h(B(0,2\delta^{-1}n^{1/\beta}))}(W(\lfloor n^{100/\beta}\rfloor,\infty)\cap B(0,2\delta^{-1}n^{1/\beta}))&\rightarrow0,\\
    \P_{h(B(0,2\delta^{-1}n^{1/\beta}))}(M(\lfloor n^{100/\beta}/3\rfloor,\infty)\cap B(0,2\delta^{-1}n^{1/\beta}))&\rightarrow0
\end{align}
as $n\rightarrow\infty$. Also, in between integer times, with high probability, the Brownian motion does not stray too far:
\begin{equation}
    \begin{split}
        \P\left(\exists0\leq k\leq n^{100/\beta}\colon \sup_{k\leq t\leq k+1}\|M_t-M_k\|\geq\frac{1}{4}\delta n^{1/\beta}\right)\rightarrow0,\qquad n\rightarrow\infty.
    \end{split}
\end{equation}
Combining the above, we obtain that on the event $E$, for $n$ sufficiently large depending on $\delta$, 
\begin{equation}
    \begin{split}
        &\P^W_{h(B(0,2\delta^{-1}n^{1/\beta}))}(W[0,\infty)\in B(\eta[0,\lfloor nu\rfloor],3\delta n^{1/\beta}))\\
        \geq&\P^M_{h(B(0,2\delta^{-1}n^{1/\beta}))}(M[0,\infty)\in B(\eta[0,\lfloor nu\rfloor],2\delta n^{1/\beta})).
    \end{split}
\end{equation}
Thus, by \eqref{eq:capacitysubsetZd} and \eqref{eq:capacitysubsetRd}, we have
\begin{equation}
    \begin{split}
        \Ca_{\Z^3}(B(\eta[0,\lfloor nu\rfloor],3\delta n^{1/\beta})\geq 3\Ca_{\R^3}(B(\eta[0,\lfloor nu\rfloor],2\delta n^{1/\beta})).
    \end{split}
\end{equation}
The extra factor 3 comes from the fact that $\lim_{\|y\|\rightarrow\infty}\frac{G_{\Z^d}(0,y)}{G_{\R^d}(0,y)}=3$, see \cite[Theorem 3.3]{morters2010brownian} and \cite[Theorem 4.3.1]{lawler2010random}.

\paragraph{Step 3.}
On the event $E$, we have $B(n^{-1/\beta}\eta[0,\lfloor nu\rfloor],2\delta)\supset B(\gamma[0,u],\delta)\supset\gamma[0,u]$, so
\begin{equation}
    3\Ca_{\R^3}(B(n^{-1/\beta}\eta[0,\lfloor nu\rfloor],2\delta))\geq3\Ca_{\R^3}(\gamma[0,u]).
\end{equation}
Combining Steps 1-3, we obtain that for all $\varepsilon>0$, there exists $n_0$ such that for all $n\geq n_0$,
\begin{equation}
    \frac{1}{n^{1/\beta}}\Ca_{\Z^3}(\eta[0,\lfloor nu\rfloor])\geq3(1-\varepsilon)\Cr(\gamma[0,u]),
\end{equation}
which concludes the proof of \eqref{eq:lln3dupperbound}.

\paragraph{Reverse inequality.} The reverse inequality of \eqref{eq:lln3dupperbound} is entirely analogous. Steps 0, 2 and 3 are the same. Step 1 is also the same, only using that $E\subset E_3$, instead of $E_2$.
\end{proof}

\subsection{Proof of Theorem \ref{th:scaling-limit-cap-par}}
Let $C_n(u)=n^{-1}\Cz(\eta[0,\lfloor 3^{-\beta}n^{\beta}u\rfloor])$ and $C(u)=\Cr(\gamma[0,u])$. Let $D([0,\infty),[0,\infty))$ be the space of c\`adl\`ag processes in $[0,\infty)$ equipped with the metric $\chi$ as defined in Section \ref{sec:d3res}
and consider the right-inverse function $F\colon D([0,\infty),[0,\infty))\to D([0,\infty),[0,\infty))$, defined
\begin{equation}
    F(A)(t):=\inf\{v:A(v)> t\}.
\end{equation}
Then $\tau_n=3^{-\beta}n^{\beta}F(C_n)$ and $\theta=F(C)$. 

\begin{lemma}\label{lem:convergence-Cn}
There exists a coupling of $(n^{-1/\beta}\eta(n\cdot))_{n\in\N}$ and $\gamma$ such that $n^{-1/\beta}\eta(n\cdot)$ converges almost surely to $\gamma$ in $(\mathcal{C},\chi)$ and such that $(C_n)_{n\in\N}$ converges almost surely to $C$ in $(D([0,\infty),[0,\infty)),\chi)$ as $n\rightarrow\infty$.
\end{lemma}
\begin{proof}
First note that for $u_1<u_2$, $C_n(u_2)-C_n(u_1)$ is bounded from above by a universal constant times the diameter of $n^{-1}\eta[\lfloor3^{-\beta}n^\beta u_1\rfloor,\lfloor 3^{-\beta}n^\beta u_2\rfloor])$. Convergence of $(n^{-1}\eta(3^{-\beta}n^\beta t))_{t\geq0}$ with respect to the metric $\chi$ combined with the Arzel\`a-Ascoli theorem thus implies that $C_n$ is sequentially compact with respect to $\chi$. It follows from convergence of the finite-dimensional distributions in Proposition~\ref{prop:d3-general} that all subsequential limits must equal $C$.
\end{proof}

\begin{proof}[Proof of Theorem \ref{th:scaling-limit-cap-par}]
Couple $\eta$ and $\gamma$ as in Lemma~\ref{lem:convergence-Cn}. Then  $C_n\rightarrow C$ almost surely with respect to $\chi$ as $n\rightarrow\infty$ by Lemma \ref{lem:convergence-Cn}. Note that $\gamma$ is a.s. continuous and injective \cite[Section 9.1]{li2025convergence}. This implies that $C$ is a.s. continuous and strictly increasing. It follows from \cite[Corollary 13.6.4]{whitt2002stochastic} that $C$ is a.s. a continuity point of $F$. The continuous mapping theorem then implies that $F(C_n)$ converges a.s. to $F(C)$. So almost surely, for all $s\geq0$,
\begin{equation}
    3^{\beta}n^{-\beta}\tau_{n}(s)=F(C_{n})(s)\rightarrow F(C)(s)=\theta(s)
\end{equation}
as $n\rightarrow\infty$. Let $T>0$. By convergence of $(n^{\beta}\eta(n\cdot))_{n\in\N}$ and by the diameter bound on capacity, there exists $U$ such that $\tau_n(T)\leq n^\beta U$ for all $n\in\N$. Then
\begin{equation}
    \begin{split}
        &\,\sup_{0\leq t\leq T}|\widetilde{\eta}_n(t)-\widetilde\gamma(t)|\\
        =&\,\sup_{0\leq t\leq T}|n^{-1}\eta_n(\tau_n(t))-\gamma(3^{-\beta}\theta(t))|\\
        \leq&\,\sup_{0\leq t\leq T}|n^{-1}\eta_n(\tau_n(t))-\gamma(3^{-\beta}3^{\beta}n^{-\beta}\tau_n(t))|+|\gamma(3^{-\beta}3^{\beta}n^{-\beta}\tau_n(t))-\gamma(3^{-\beta}\theta(t))|\\
        \leq&\,\sup_{0\leq u\leq U}|n^{-1}\eta([0,n^{\beta}u])-\gamma(u)|+\sup_{0\leq t\leq T}|\gamma(3^{-\beta}3^{\beta}n^{-\beta}\tau_n(t))-\gamma(3^{-\beta}\theta(t))|.
    \end{split}
\end{equation}
The first term tends to 0 by convergence of $n^{-1}\eta(n^{\beta}\cdot)$ to $\gamma$ with respect to the metric $\chi$, which is equivalent to uniform convergence on compact domains. To see that the second term tends to 0, we use that $F(C_{n})$ converges to $\theta$ with respect to $\chi$, which implies that $F(C_n)$ converges uniformly to $\theta$ on $[0,T]$. Then by uniform continuity of $\gamma$ on $[0,U]$, we conclude that the second term vanishes, which completes the proof.
\end{proof}

\paragraph{Acknowledgements} The author is very grateful to Perla Sousi for her comments on this paper and the many mathematical discussions. This work was supported by the University of Cambridge Harding Distinguished Postgraduate Scholarship Programme

\bibliographystyle{hmralphaabbrv}
\bibliography{referenties}

\begin{thebibliography}{ACHTS21}

\bibitem[ACHTS21]{angel21scaling}
O.~Angel, D.~A. Croydon, S.~Hernandez-Torres, and D.~Shiraishi.
\newblock Scaling limits of the three-dimensional uniform spanning tree and
  associated random walk.
\newblock {\em Ann. Probab.}, 49(6):3032--3105, 2021. \MR{4348685}

\bibitem[ANS24]{archer24ghp}
E.~Archer, A.~Nachmias, and M.~Shalev.
\newblock The {GHP} scaling limit of uniform spanning trees in high dimensions.
\newblock {\em Comm. Math. Phys.}, 405(3):Paper No. 73, 41, 2024. \MR{4712854}

\bibitem[AS24]{archer24dense}
E.~Archer and M.~Shalev.
\newblock The {GHP} scaling limit of uniform spanning trees of dense graphs.
\newblock {\em Random Structures Algorithms}, 65(1):149--190, 2024.
  \MR{4760770}

\bibitem[ASS18]{asselah18capacity}
A.~Asselah, B.~Schapira, and P.~Sousi.
\newblock Capacity of the range of random walk on $\mathbb{Z}^d$.
\newblock {\em Transactions of the American Mathematical Society},
  370(11):7627--7645, 2018.

\bibitem[ASS19]{asselahcapacity19}
A.~Asselah, B.~Schapira, and P.~Sousi.
\newblock Capacity of the range of random walk on {$\mathbb{Z}^4$}.
\newblock {\em Ann. Probab.}, 47(3):1447--1497, 2019. \MR{3945751}

\bibitem[Ass23]{asselahprivate}
A.~Asselah.
\newblock Private communication, 2023.

\bibitem[AZ96]{albeverio96intersections}
S.~Albeverio and X.~Y. Zhou.
\newblock Intersections of random walks and {W}iener sausages in four
  dimensions.
\newblock {\em Acta Appl. Math.}, 45(2):195--237, 1996. \MR{1414282}

\bibitem[Bil95]{billingsley1995probability}
P.~Billingsley.
\newblock {\em Probability and measure}.
\newblock Wiley Series in Probability and Mathematical Statistics. John Wiley
  \& Sons, Inc., New York, third edition, 1995.
\newblock A Wiley-Interscience Publication. \MR{1324786}

\bibitem[Cha17]{chang17two}
Y.~Chang.
\newblock Two observations on the capacity of the range of simple random walks
  on {$\mathbb Z^3$} and {$\mathbb Z^4$}.
\newblock {\em Electron. Commun. Probab.}, 22:Paper No. 25, 9, 2017.
  \MR{3652038}

\bibitem[Ein89]{einmahl1989extensions}
U.~Einmahl.
\newblock Extensions of results of {K}oml\'{o}s, {M}ajor, and {T}usn\'{a}dy to
  the multivariate case.
\newblock {\em J. Multivariate Anal.}, 28(1):20--68, 1989. \MR{996984}

\bibitem[Gra09]{gray09probability}
R.~M. Gray.
\newblock {\em Probability, random processes, and ergodic properties}.
\newblock Springer, Dordrecht, second edition, 2009. \MR{2840299}

\bibitem[HH24]{halberstam22logarithmic}
N.~Halberstam and T.~Hutchcroft.
\newblock Logarithmic corrections to the {A}lexander-{O}rbach conjecture for
  the four-dimensional uniform spanning tree.
\newblock {\em Comm. Math. Phys.}, 405(10):Paper No. 238, 45, 2024.
  \MR{4797749}

\bibitem[HLS24]{hernandez2024sharp}
S.~{Hernandez-Torres}, X.~{Li}, and D.~{Shiraishi}.
\newblock {Sharp one-point estimates and Minkowski content for the scaling
  limit of three-dimensional loop-erased random walk}.
\newblock {\em arXiv e-prints}, page arXiv:2403.07256, March 2024, 2403.07256.

\bibitem[HS23]{hutchcroft2020logarithmic}
T.~Hutchcroft and P.~Sousi.
\newblock Logarithmic corrections to scaling in the four-dimensional uniform
  spanning tree.
\newblock {\em Comm. Math. Phys.}, 401(2):2115--2191, 2023. \MR{4610293}

\bibitem[Hut20]{hutchcroft2020universality}
T.~Hutchcroft.
\newblock Universality of high-dimensional spanning forests and sandpiles.
\newblock {\em Probab. Theory Related Fields}, 176(1-2):533--597, 2020.
  \MR{4055195}

\bibitem[JO68]{jain68range}
N.~Jain and S.~Orey.
\newblock On the range of random walk.
\newblock {\em Israel J. Math.}, 6:373--380 (1969), 1968. \MR{243623}

\bibitem[Koz07]{kozma2007scaling}
G.~Kozma.
\newblock The scaling limit of loop-erased random walk in three dimensions.
\newblock {\em Acta Math.}, 199(1):29--152, 2007. \MR{2350070}

\bibitem[Law80]{lawler80selfavoiding}
G.~F. Lawler.
\newblock A self-avoiding random walk.
\newblock {\em Duke Math. J.}, 47(3):655--693, 1980. \MR{587173}

\bibitem[Law91]{lawler91intersections}
G.~F. Lawler.
\newblock {\em Intersections of random walks}.
\newblock Probability and its Applications. Birkh\"{a}user Boston, Inc.,
  Boston, MA, 1991. \MR{1117680}

\bibitem[LL10]{lawler2010random}
G.~F. Lawler and V.~Limic.
\newblock {\em Random walk: a modern introduction}, volume 123 of {\em
  Cambridge Studies in Advanced Mathematics}.
\newblock Cambridge University Press, Cambridge, 2010. \MR{2677157}

\bibitem[LS25a]{li2025convergence}
X.~Li and D.~Shiraishi.
\newblock Convergence of three-dimensional loop-erased random walk in the
  natural parametrization.
\newblock {\em Probab. Theory Related Fields}, 191(1-2):421--521, 2025.
  \MR{4869259}

\bibitem[LS25b]{losev2023long}
I.~Losev and S.~Smirnov.
\newblock How long are the arms in {DBM}?
\newblock {\em Comm. Math. Phys.}, 406(4):Paper No. 93, 16, 2025. \MR{4886061}

\bibitem[LSW04]{lawler2011conformal}
G.~F. Lawler, O.~Schramm, and W.~Werner.
\newblock Conformal invariance of planar loop-erased random walks and uniform
  spanning trees.
\newblock {\em Ann. Probab.}, 32(1B):939--995, 2004. \MR{2044671}

\bibitem[LSW19]{lawler19fourdimensional}
G.~Lawler, X.~Sun, and W.~Wu.
\newblock Four-dimensional loop-erased random walk.
\newblock {\em Ann. Probab.}, 47(6):3866--3910, 2019. \MR{4038044}

\bibitem[LV21]{lawler2021convergence}
G.~F. Lawler and F.~Viklund.
\newblock Convergence of loop-erased random walk in the natural
  parameterization.
\newblock {\em Duke Math. J.}, 170(10):2289--2370, 2021. \MR{4291423}

\bibitem[MNS21]{michaeli2021diameter}
P.~Michaeli, A.~Nachmias, and M.~Shalev.
\newblock The diameter of uniform spanning trees in high dimensions.
\newblock {\em Probab. Theory Related Fields}, 179(1-2):261--294, 2021.
  \MR{4221658}

\bibitem[MP10]{morters2010brownian}
P.~M\"{o}rters and Y.~Peres.
\newblock {\em Brownian motion}, volume~30 of {\em Cambridge Series in
  Statistical and Probabilistic Mathematics}.
\newblock Cambridge University Press, Cambridge, 2010.
\newblock With an appendix by Oded Schramm and Wendelin Werner. \MR{2604525}

\bibitem[PR04]{peres04scaling}
Y.~{Peres} and D.~{Revelle}.
\newblock {Scaling limits of the uniform spanning tree and loop-erased random
  walk on finite graphs}.
\newblock {\em arXiv Mathematics e-prints}, page math/0410430, October 2004,
  math/0410430.

\bibitem[Sch09]{schweinsberg2009loop}
J.~Schweinsberg.
\newblock The loop-erased random walk and the uniform spanning tree on the
  four-dimensional discrete torus.
\newblock {\em Probab. Theory Related Fields}, 144(3-4):319--370, 2009.
  \MR{2496437}

\bibitem[Shi18]{shiraishi2018growth}
D.~Shiraishi.
\newblock Growth exponent for loop-erased random walk in three dimensions.
\newblock {\em Ann. Probab.}, 46(2):687--774, 2018. \MR{3773373}

\bibitem[SS18]{sapozhnikov18brownian}
A.~Sapozhnikov and D.~Shiraishi.
\newblock On {B}rownian motion, simple paths, and loops.
\newblock {\em Probab. Theory Related Fields}, 172(3-4):615--662, 2018.
  \MR{3877544}

\bibitem[Whi02]{whitt2002stochastic}
W.~Whitt.
\newblock {\em Stochastic-process limits}.
\newblock Springer Series in Operations Research. Springer-Verlag, New York,
  2002.
\newblock An introduction to stochastic-process limits and their application to
  queues. \MR{1876437}

\end{thebibliography}

\end{document}